\documentclass{amsart} 
\usepackage{graphicx}
\usepackage{amssymb, amsmath, sidecap}
\usepackage{amsfonts}
\usepackage{amssymb}
\usepackage{float}
\usepackage{amsfonts}
\usepackage{extarrows}
\usepackage{mathrsfs}
\usepackage{booktabs}
\usepackage{verbatim}
\usepackage{hyperref}
\usepackage[usenames,dvipsnames]{xcolor}

\floatplacement{figure}{H}

\renewcommand{\baselinestretch}{1}

\def\bt{\begin{thm}}
\def\et{\end{thm}}
\def\bl{\begin{lem}}
\def\el{\end{lem}}
\def\bd{\begin{defi}}
\def\ed{\end{defi}}
\def\bc{\begin{cor}}
\def\ec{\end{cor}}
\def\bp{\begin{proof}}
\def\ep{\end{proof}}
\def\br{\begin{rem}}
\def\er{\end{rem}}

\def\be{\begin{equation}}
\def\ee{\end{equation}}
\def\bes{\begin{equation*}}
\def\ees{\end{equation*}}
\def\bea{\begin{equation} \begin{aligned}}
\def\eea{\end{aligned} \end{equation}}
\def\beas{\begin{equation*} \begin{aligned}}
\def\eeas{\end{aligned} \end{equation*}}
\def\ba{\begin{align}}
\def\ea{\end{align}}
\def\bas{\begin{align*}}
\def\eas{\end{align*}}

\newtheorem{thm}{Theorem}[section]
\newtheorem{lem}{Lemma}[section]
\newtheorem{defi}{Definition}[section]
\newtheorem{ex}{Example}[section]
\newtheorem{prop}[thm]{Proposition}

\newtheorem{rem}{Remark}[section]
\newtheorem{cor}{Corollary}[section]

\numberwithin{equation}{section}
\numberwithin{figure}{section}

\begin{document}
\title{On a compact trace embedding theorem in Musielak-Sobolev spaces
\footnotemark[1]}

\author[Wang]{Li Wang}
\address[Li Wang]{School of Mathematics and Statistics, Lanzhou
University, Lanzhou 730000, P. R. China} \email{lwang10@lzu.edu.cn}

\author[Liu]{Duchao Liu}
\address[Duchao Liu]{School of Mathematics and Statistics, Lanzhou
University, Lanzhou 730000, P. R. China} \email{liuduchao@gmail.com,
Tel.: +8613893289235, fax: +8609318912481}

\footnotetext[1]{
Research supported by the National Natural Science Foundation of
China (NSFC 11501268 and NSFC 11471147) and the Fundamental Research Funds for the 
Central Universities (lzujbky-2016-101).}

\keywords{Musielak-Sobolev space; Nonlinear boundary condition; Mountain pass type solution; Genus theory.}
\subjclass{35B38, 35D05, 35J20}
\begin{abstract}
By a stronger compact boundary embedding theorem in Musielak-Orlicz-Sobolev space developed in the paper, variational method is employed to deal with the nonlinear elliptic equation with the nonlinear Neumann boundary condition in the framework of Musielak-Orlicz-Sobolev space.
\end{abstract}
 \maketitle


\section{Introduction}

In this paper we develop a stronger compact trace embedding of Sobolev type in Musielak-Orlicz-Sobolev (or Musielak-Sobolev) spaces than the existed results in mathematics literature \cite{Liu:16}. And as an application of this stronger compact trace embedding theorem, the existence and multiplicity of weak solutions in Musielak-Orlicz-Sobolev spaces for the following equation are considered
\begin{equation*}
\begin{cases}
  -\text{div}(a_{1}(x,|\nabla u|)\nabla u)+a_{0}(x,u)=0 &\text{ in } \Omega,\\
a_{1}(x,|\nabla u|)\frac{\partial u}{\partial n}=g(x,u) &\text{ on } \partial \Omega,
\end{cases}
\end{equation*}
where $\Omega$ is a bounded smooth domain in $\mathbb{R}^{N}$, $a_{0},a_1:\Omega\times \mathbb{R}\to \mathbb{R}$ are Carath\'{e}odory functions and $\frac{\partial u}{\partial n}$ is the outward unit normal derivative on $\partial \Omega$.

In the study of nonlinear differential equations, it is well known
that more general functional space can handle differential equations
with more complex nonlinearities. If we want to study a general form
of differential equations, it is very important to find a proper
functional space in which the solutions may exist. Musielak-Sobolev 
space is such a general kind of Sobolev space that the classical
Sobolev spaces, Orlicz-Sobolev
spaces and variable exponent Sobolev spaces can be interpreted as its special cases.

Differential equations in Orlicz-Sobolev spaces have been studied extensively in recent years (see \cite{Wang_lh, Trudinger_1971, Eleuteri, Fan_vari_boundary, Fan3, Mihai}). To the best of our knowledge, however, differential equations in Musielak-Sobolev spaces have been studied little. In \cite{Benkirane:13}, Benkirane and Sidi give an embedding theorem in Musielak-Sobolev spaces and an existence result for nonlinear elliptic equations.
In \cite{Fan:12} and \cite{1Fan:12}, Fan gives two embedding theorems and some properties of differential operators in Musielak-Sobolev spaces.
In the research of the paper \cite{Liu:15}, Liu and Zhao give existence the results for nonlinear elliptic equations with homogeneous Dirichlet boundary. In a much recent paper \cite{Liu:16}, Liu, Wang and Zhao give a trace embedding theorem and a compact trace embedding theorem in bounded domain about this kind of functional space. Motivated by \cite{Liu:16}, we find that the results in that paper can be better improved and it is possible to make an application of the result to get some existence results for nonlinear elliptic equations with nonlinear Neumann boundary condition. By developing a stronger trace embedding theorem in Musielak-Sobolev spaces, our aim in this paper is to study the existence of solutions to a kind of elliptic differential equation with Neumann boundary condition under the functional frame work of Musielak-Sobolev spaces, which is a more general case in the variable exponent Sobolev spaces and Orlicz-Sobolev spaces.

As an example of Musielak-Sobolev spaces we claim that not only variable exponent Sobolev spaces satisfy the conditions in our theorem (see \textbf{Example \ref{ex1}} in Section \ref{Sec2}), but some more complex case can also work (see \textbf{Example \ref{ex2}} in Section \ref{Sec2}). And the current existed theories in mathematical literature can not cover the case in \textbf{Example \ref{ex2}}.

The paper is organized as follows. In Section \ref{Sec2},  we develop a stronger compact trace embedding theorem and for the
readers' convenience recall some definitions and properties about
Musielak-Orlicz-Sobolev spaces. In Section \ref{Sec3}, we give some propositions of differential operators in Musielak-Sobolev spaces. In Section \ref{Sec4},
we give the existence results of weak solutions to the equations.

\section{A stronger compact trace embedding theorem in Musielak-Orlicz-Sobolev spaces}\label{Sec2}

In this section, we will give a stronger compact trace embedding theorem to proceed.  And we will list some basic definitions and propositions about Musielak-Orlicz-Sobolev spaces.

Firstly, we give the definition of \textit{N-function} and \textit{generalized N-function} as follows.

\begin{defi}
A function $A:\mathbb{R}\to[0,+\infty)$ is called an $N$-function, denoted by $A\in N$, 
\begin{equation*}
\lim\limits_{t\to0+}\frac{A(t)}{t}=0\text{ and }\lim\limits_{t\to+\infty}\frac{A(t)}{t}=+\infty.
\end{equation*}
A function $A:\Omega\times\mathbb{R}\to[0,+\infty)$ is called a generalized $N$-function, denoted by $A\in N(\Omega)$, if for each $t\in [0,+\infty)$, the function $A(\cdot,t)$ is measurable, and for a.e. $x\in \Omega$, $A(x,\cdot)\in N$.
\end{defi}

Let $A\in N(\Omega)$, the Musielak-Orlicz space $L^{A}(\Omega)$ is defined by
\begin{equation*}
\begin{aligned}
L^{A}(\Omega):=\{u:u\text{ is a measurable real function,}&\\ 
\text{and }\exists \lambda>0\text{ such that }&\int\limits_{\Omega}A\bigg(x,\frac{|u(x)|}{\lambda}\bigg)\,\mathrm{d}x<+\infty\}
\end{aligned}
\end{equation*}
with the (\textit{Luxemburg}) norm
\begin{equation*}
\|u\|_{L^{A}(\Omega)}=\|u\|_{A}:=
\inf\bigg\{\lambda>0:\int\limits_{\Omega}A\bigg(x,\frac{|u(x)|}{\lambda}\bigg)\,\mathrm{d}x\leq1\bigg\}.
\end{equation*}
The Musielak-Sobolev space $W^{1,A}(\Omega)$ can be defined by
\begin{equation*}
W^{1,A}(\Omega):=\{u\in L^{A}(\Omega):|\nabla u|\in L^{A}(\Omega)\}
\end{equation*}
with the norm
\begin{equation*}
\|u\|:=\|u\|_{W^{1,A}(\Omega)}:=\|u\|_{A}+\|\nabla u\|_{A},
\end{equation*}
where $\|\nabla u\|_{A}:=\||\nabla u|\|_{A}$. And $\|u\|_{\rho}:=\inf\{\lambda>0,\int\limits_{\Omega}A(x,\frac{|u|}{\lambda})+A(x,\frac{|\nabla u|}{\lambda}\big)\,\mathrm{d}x\leq 1\}$ is an equivalent norm on $W^{1,A}(\Omega)$, see in \cite{Fan:12}.

We say that $A\in N(\Omega)$ satisfies Condition $\Delta_{2}(\Omega)$, if there exists a positive constant $K>0$ and a nonnegative function $h\in L^{1}(\Omega)$ such that
\begin{equation*}
A(x,2t)\leq K A(x,t)+h(x)\quad\text{for }x\in\Omega \text{ and }t\in \mathbb{R}.
\end{equation*}

$A$ is called locally integrable if $A(\cdot,t_{0})\in L_{\text{loc}}^{1}(\Omega)$ for every $t_{0}>0$. For $x\in\Omega$ and $t\geq0$, we denote by $a(x,t)$ the right-hand derivative of $A(x,t)$ at $t$, at the same time define $a(x,t)=-a(x,-t)$. Then $A(x,t)=\int_{0}^{|t|}a(x,s)\,\mathrm{d}s$ for $x\in \Omega$ and $t\in R$.

Define $\widetilde{A}:\Omega\times \mathbb{R}\to[0,+\infty)$ by
\begin{equation*}
\widetilde{A}(x,s)=\sup\limits_{t\in \mathbb{R}}(st-A(x,t))\text{ for }x\in\Omega\text{ and }s\in \mathbb{R}.
\end{equation*}
$\widetilde{A}$ is called the complementary function to $A$ in the sense of \textit{Young}. It is well known that $\widetilde{A}\in N(\Omega)$ and $A$ is also the complementary function to $\widetilde A$.

For $x\in \Omega$ and $s\geq 0$, we denote by $a_{+}^{-1}(x,s)$ the right-hand derivative of $\widetilde{A}(x,\cdot)$ at $s$, at the same time define $a_{+}^{-1}(x,s)=-a_{+}^{-1}(x,-s)$ for $x\in \Omega$ and $s\leq0$. Then for $x\in \Omega$ and $s\geq 0$, we have
\begin{equation*}
a_{+}^{-1}(x,s)=\sup\{t\geq0:a(x,t)\leq s\}=\inf\{t>0:a(x,t)>s\}.
\end{equation*}

\begin{prop}(See \cite{Liu:15}.)
Let $A\in N(\Omega)$. Then the following assertions hold.
\begin{enumerate}
\item[(1)] $A(x,t)\leq a(x,t)t\leq A(x,2t)$ for $x\in\Omega$ and $t\in \mathbb{R}$;
\item[(2)] $A$ and $\widetilde{A}$ satisfy the Young inequality
\begin{equation*}
st\leq A(x,t)+\widetilde{A}(x,s) \text{ for } x\in\Omega\text{ and }s,t\in \mathbb{R}
\end{equation*}
and the equality holds if $s=a(x,t)$ or $t=a_{+}^{-1}(x,s)$.
\end{enumerate}
\end{prop}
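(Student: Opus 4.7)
The plan is to derive (1) from the integral representation $A(x,t)=\int_0^{|t|} a(x,s)\,\mathrm{d}s$ together with the monotonicity of $a(x,\cdot)$, and to derive (2) directly from the defining supremum of $\widetilde A$.

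For (1), since $a(x,t)=-a(x,-t)$ and $A(x,\cdot)$ is even (being a primitive of an odd function), I may assume $t\geq 0$. Because $A(x,\cdot)\in N$ is convex, its right-derivative $a(x,\cdot)$ is nondecreasing on $[0,+\infty)$. First I would estimate from above using this monotonicity on $[0,t]$:
\begin{equation*}
A(x,t)=\int_0^{t} a(x,s)\,\mathrm{d}s\leq \int_0^{t} a(x,t)\,\mathrm{d}s=a(x,t)\,t.
\end{equation*}
Next I would estimate from below by exploiting monotonicity on $[t,2t]$:
\begin{equation*}
a(x,t)\,t=\int_t^{2t}a(x,t)\,\mathrm{d}s\leq \int_t^{2t}a(x,s)\,\mathrm{d}s\leq \int_0^{2t}a(x,s)\,\mathrm{d}s=A(x,2t).
\end{equation*}

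For (2), the inequality $st\leq A(x,t)+\widetilde A(x,s)$ is immediate from the definition $\widetilde A(x,s)=\sup_{\tau\in\mathbb{R}}(s\tau-A(x,\tau))\geq st-A(x,t)$. For the equality case, I would analyse the concave function $\varphi(\tau):=s\tau-A(x,\tau)$, whose right-derivative $s-a(x,\tau)$ is non-increasing in $\tau$. If $s=a(x,t)$, the right-derivative changes sign at $\tau=t$, so $\varphi$ attains its supremum there and we obtain $\widetilde A(x,s)=st-A(x,t)$. The second equality case follows symmetrically: because $t=a_+^{-1}(x,s)=\sup\{\tau\geq 0:a(x,\tau)\leq s\}$, the function $\varphi$ is nondecreasing on $[0,t]$ and nonincreasing on $[t,+\infty)$, so again $\tau=t$ is a maximizer.

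The only genuine subtlety, and hence the place where I expect to spend the most care, is the bookkeeping around right-continuity of $a(x,\cdot)$: since $a(x,\cdot)$ may have jump discontinuities, the equality case relies on showing that the supremum of $\varphi$ is genuinely attained at the stated $\tau$, which is exactly where the specific definition $a_+^{-1}(x,s)=\sup\{\tau\geq 0:a(x,\tau)\leq s\}=\inf\{\tau>0:a(x,\tau)>s\}$ becomes essential. Apart from this point, the proposition is a direct consequence of convexity, the fundamental theorem of calculus applied to $A(x,t)=\int_0^{|t|} a(x,s)\,\mathrm{d}s$, and the Fenchel--Young duality encoded in the definition of $\widetilde A$.
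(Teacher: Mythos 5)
Your proof is correct and is the standard argument: the paper itself gives no proof of this proposition, simply citing it from \cite{Liu:15} (where it is established in essentially the same way, via the integral representation $A(x,t)=\int_0^{|t|}a(x,s)\,\mathrm{d}s$, the monotonicity of the right-derivative $a(x,\cdot)$, and the Fenchel--Young supremum defining $\widetilde A$). Your attention to the attainment of the supremum at $\tau=t$ despite possible jumps of $a(x,\cdot)$ is exactly the right point of care, and the argument as written goes through.
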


Let
$A,B\in N(\Omega)$. We say that $A$ is weaker than $B$, denoted by $A\preccurlyeq B$, if there exist positive constants $K_{1},K_{2}$ and a nonnegative function $h\in L^{1}(\Omega)$ such that
\begin{equation*}
A(x,t)\leq K_{1}B(x,K_{2}t)+h(x)\text{ for }x\in\Omega\text{ and }t\in[0,+\infty).
\end{equation*}

\begin{prop}(See \cite{Liu:15}.)
Let $A,B\in N(\Omega)$, and $A\preccurlyeq B$. Then $\widetilde B\preccurlyeq \widetilde{A}$, $L^{B}(\Omega)\hookrightarrow L^{A}(\Omega)$ and $L^{\widetilde{A}}(\Omega)\hookrightarrow L^{\widetilde B}(\Omega)$.
\end{prop}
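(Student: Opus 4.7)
The statement has three assertions, and the last one is an immediate consequence of the first applied to the pair $(A,B)$ followed by the second applied to the resulting relation $\widetilde B \preccurlyeq \widetilde A$ (using that $\widetilde{(\widetilde A)}=A$). So the plan is to isolate and prove (1) and (2); (3) then drops out for free.

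For (1), the strategy is to transport the hypothesis $A(x,t)\leq K_1 B(x, K_2 t)+h(x)$ through the Legendre--Young transform. Substituting $u=K_2 t$ and rearranging gives $B(x,u)\geq \frac{1}{K_1}A(x,u/K_2)-\frac{h(x)}{K_1}$, whence $su-B(x,u)\leq su-\frac{1}{K_1}A(x,u/K_2)+\frac{h(x)}{K_1}$. Setting $v=u/K_2$ inside the right-hand side rewrites the first two terms as $\frac{1}{K_1}\bigl(K_1 K_2 sv - A(x,v)\bigr)$; taking the supremum over $u$ on the left and over $v$ on the right yields $\widetilde B(x,s)\leq \frac{1}{K_1}\widetilde A(x,K_1K_2 s)+\frac{h(x)}{K_1}$, which is exactly $\widetilde B\preccurlyeq \widetilde A$ with new constants $K_1'=1/K_1$, $K_2'=K_1K_2$, and integrable weight $h/K_1$.

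For (2), the task is to produce a constant $C$ with $\|u\|_A\leq C\|u\|_B$ for every $u\in L^B(\Omega)$. With $\mu:=\|u\|_B$ one has $\int_\Omega B(x,|u|/\mu)\,\mathrm{d}x\leq 1$ by definition of the Luxemburg norm. Applying the pointwise bound at $t=|u|/(K_2\mu)$ and integrating gives $\int_\Omega A\bigl(x,|u|/(K_2\mu)\bigr)\,\mathrm{d}x\leq K_1+\|h\|_{L^1(\Omega)}$. To squeeze this quantity below $1$ I would exploit convexity of each $A(x,\cdot)$ together with $A(x,0)=0$, which yields $A(x,t/c)\leq A(x,t)/c$ for every $c\geq 1$. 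Choosing $C:=\max\{K_1+\|h\|_{L^1(\Omega)},\,1\}$ and $\lambda:=CK_2\mu$ then gives $\int_\Omega A(x,|u|/\lambda)\,\mathrm{d}x\leq 1$, so $\|u\|_A\leq CK_2\|u\|_B$.

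The main technical nuisance is the bookkeeping in (1): one must keep careful track of how $K_1$, $K_2$ and $h$ are redistributed by the Legendre transform, in particular verifying that the new weight is still in $L^1(\Omega)$ and that the multiplicative scaling gets absorbed into the argument of $\widetilde A$ rather than into its prefactor. Steps (2) and (3) are by comparison routine: (2) is a standard Luxemburg-norm rescaling argument, and (3) is (2) applied to the complementary pair obtained in (1).
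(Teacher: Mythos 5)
Your proof is correct. Note that the paper itself offers no argument for this proposition---it is simply quoted from \cite{Liu:15}---so there is no ``paper's proof'' to diverge from; your argument is the standard one (conjugation via the Young transform reverses $\preccurlyeq$ with constants $1/K_1$, $K_1K_2$ and weight $h/K_1$, and the embedding follows from a Luxemburg-norm rescaling using convexity of $A(x,\cdot)$ with $A(x,0)=0$), and all three steps, including deducing the third assertion from the first two, check out.
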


\begin{prop}\label{p2.3}(See \cite{Liu:15}.)
Let $A\in N(\Omega)$ satisfy $\Delta_{2}(\Omega)$. Then the following assertions hold,
\begin{enumerate}
\item[(1)] $L^{A}(\Omega)=\{u:u$ is a measurable function, and $\int_{\Omega}A(x,|u(x)|)\,\mathrm{d}x<+\infty\}$;
\item[(2)] $\int_{\Omega}A(x,|u(x)|)\,\mathrm{d}x<1$ (resp. $=1$; $>1$) $\Leftrightarrow \|u\|_{A}<1$ (resp. $=1$; $>1$), where $u\in L^{A}(\Omega)$;
\item[(3)] $\int_{\Omega}A(x,|u_{n}(x)|)\,\mathrm{d}x\to0$ (resp. $1$; $+\infty$) $\Leftrightarrow \|u_{n}\|_{A}\to0$ (resp. $1$; $+\infty$), where $\{u_{n}\}\subset L^{A}(\Omega)$;
\item[(4)] $u_{n}\to u$ in $L^{A}(\Omega)$ $\Rightarrow$ $\int_{\Omega}|A(x,|u_{n}(x)|)-A(x,|u(x)|)|\,\mathrm{d}x\to 0$ as $n\to\infty$;
\item[(5)] If $\widetilde{A}$ also satisfies $\Delta_{2}(\Omega)$, then
\begin{equation*}
\bigg|\int_{\Omega}u(x)v(x)\,\mathrm{d}x\bigg|\leq2\|u\|_{A}\|v\|_{\widetilde{A}},\qquad\forall\, u\in L^{A}(\Omega), v\in L^{\widetilde{A}}(\Omega);
\end{equation*}
\item[(6)] $a(\cdot,|u(\cdot)|)\in L^{\widetilde{A}}(\Omega)$ for every $u\in L^{A}(\Omega)$.
\end{enumerate}
\end{prop}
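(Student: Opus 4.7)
These six assertions are standard modular function space results whose proofs all rest on the interplay between convexity of $A(x,\cdot)$ and iteration of the $\Delta_{2}(\Omega)$ hypothesis. My guiding object throughout is the modular $\rho_{A}(u):=\int_{\Omega}A(x,|u(x)|)\,\mathrm{d}x$, and the strategy is to first pin down how $\rho_{A}(u/\lambda)$ behaves as $\lambda$ varies, and then cash this in for each of the six statements.

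For (1), the nontrivial inclusion is that $u\in L^{A}(\Omega)$ implies $\rho_{A}(u)<+\infty$. Starting from $\rho_{A}(u/\lambda)<+\infty$ for some $\lambda>0$, I would choose $n\in\N$ with $2^{n}\lambda\geq 1$ and iterate $A(x,2t)\leq K A(x,t)+h(x)$ exactly $n$ times to bound $A(x,|u|)$ pointwise by $K^{n}A(x,|u|/\lambda)$ plus a constant multiple of $h(x)$, which is in $L^{1}(\Omega)$. Parts (2) and (3) then follow from convexity together with (1): for fixed $u$ the map $\lambda\mapsto\rho_{A}(u/\lambda)$ is nonincreasing, continuous on $(0,\infty)$ by dominated convergence (with the $\Delta_{2}$ envelope above as a majorant), and exhausts $(0,+\infty)$ as $\lambda$ runs over $(0,+\infty)$ unless $u\equiv 0$. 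This characterizes $\|u\|_{A}$ as the unique $\lambda$ where $\rho_{A}(u/\lambda)=1$ whenever $\rho_{A}(u)\geq 1$, giving (2); normalizing $u_{n}$ by $\|u_{n}\|_{A}$ then transfers the equivalence to sequences in (3).

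For (4), the convexity bound $A(x,|u_{n}|)\leq\tfrac12 A(x,2|u|)+\tfrac12 A(x,2|u_{n}-u|)$, combined with $\Delta_{2}$ and the fact that norm convergence forces $\rho_{A}(2(u_{n}-u))\to 0$, yields uniform integrability of $\{A(x,|u_{n}|)\}$; Vitali's theorem, applied to an a.e.\ convergent subsequence, then produces $L^{1}$-convergence of $A(x,|u_{n}|)$ to $A(x,|u|)$, and a subsequence-of-subsequence argument promotes this to the full sequence. Part (5) is Young's inequality from the previous proposition applied pointwise to $u/\|u\|_{A}$ and $v/\|v\|_{\widetilde{A}}$, integrated and combined with (2). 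Part (6) uses the equality case of Young together with $a(x,|u|)|u|\leq A(x,2|u|)$ from the previous proposition: these give $\widetilde{A}(x,a(x,|u|))\leq A(x,2|u|)$, which is integrable by $\Delta_{2}$ and (1).

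The main technical obstacle I anticipate is the large-modular direction of (3), namely $\|u_{n}\|_{A}\to+\infty\Rightarrow\rho_{A}(u_{n})\to+\infty$. There one must iterate $\Delta_{2}$ a number of times that grows like $\log\|u_{n}\|_{A}$, so the accumulated additive $h$-terms and the multiplicative constants $K^{n}$ have to be balanced carefully against each other; this forces a somewhat subtler bookkeeping than the small-modular direction, which reduces immediately to convexity.
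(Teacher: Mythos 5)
The first thing to say is that the paper contains no proof of this proposition to compare against: it is imported verbatim from \cite{Liu:15} (ultimately resting on Musielak's monograph), so your reconstruction can only be judged on its own terms. On those terms, your overall architecture --- work with the modular $\rho_{A}(u)=\int_{\Omega}A(x,|u(x)|)\,\mathrm{d}x$, iterate $\Delta_{2}$ finitely many times for (1), use monotonicity/continuity/convexity of $\lambda\mapsto\rho_{A}(u/\lambda)$ for (2)--(3), a convexity-plus-uniform-integrability argument for (4), Young's inequality normalized to the unit ball for (5), and the equality case of Young together with $a(x,t)t\leq A(x,2t)$ for (6) --- is the standard route and is essentially what the cited source does. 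One small slip: in (1) the condition should be $2^{n}\geq\lambda$ (so that $A(x,|u|)\leq A(x,2^{n}|u|/\lambda)$), not $2^{n}\lambda\geq 1$; when $\lambda\leq 1$ no iteration is needed at all.

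The genuine gap is that you have misidentified where the difficulty in (3) sits, and the direction you gloss over is the one your sketch does not close. The implication $\|u_{n}\|_{A}\to+\infty\Rightarrow\rho_{A}(u_{n})\to+\infty$, which you flag as the delicate case requiring $\sim\log\|u_{n}\|_{A}$ iterations of $\Delta_{2}$, is in fact immediate from convexity alone: for $M\geq 1$ one has $\rho_{A}(u/M)\leq M^{-1}\rho_{A}(u)$, so $\|u_{n}\|_{A}>M$ forces $\rho_{A}(u_{n})\geq M\rho_{A}(u_{n}/M)>M$. The hard direction is $\rho_{A}(u_{n})\to 0\Rightarrow\|u_{n}\|_{A}\to 0$. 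There the iteration you propose yields only $\rho_{A}(u_{n}/\epsilon)\leq K^{m}\rho_{A}(u_{n})+C_{m}\|h\|_{L^{1}(\Omega)}$ with $2^{m}\geq 1/\epsilon$, and the additive constant $C_{m}\|h\|_{L^{1}(\Omega)}$ coming from the inhomogeneous form of $\Delta_{2}(\Omega)$ used in this paper does not tend to $0$ and need not be below $1$; so you cannot conclude $\rho_{A}(u_{n}/\epsilon)\leq 1$. This is not a bookkeeping issue: for a generalized $N$-function that vanishes on an interval in $t$ (which the paper's definition permits), one can have $\rho_{A}(u_{n})\to 0$ while $\|u_{n}\|_{A}$ stays bounded away from $0$, even though $\Delta_{2}(\Omega)$ holds with a nontrivial $h\in L^{1}(\Omega)$. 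Closing this direction requires either a nondegeneracy hypothesis such as $(A_{1})$/positivity of $A(x,t)$ for $t>0$ (which the source \cite{Liu:15} has available) or a refinement of $\Delta_{2}$ in which the additive perturbation can be taken with arbitrarily small $L^{1}$-norm; your proposal supplies neither. A similar caution applies to (4): extracting an a.e.\ convergent subsequence from $\|u_{n}-u\|_{A}\to 0$ uses that $A(x,\cdot)$ does not degenerate near $0$, which should be stated rather than assumed tacitly.
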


\begin{prop}(See \cite{Liu:15}.)
Let $A\in N(\Omega)$ be locally integrable. Then $(L^{A}(\Omega),\|\cdot\|)$ is a separable Banach space, $W^{1,A}(\Omega)$ is reflexive provided $L^{A}(\Omega)$ is reflexive.
\end{prop}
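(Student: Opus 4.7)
The plan is to establish three separate claims: that $\|\cdot\|_A$ makes $L^{A}(\Omega)$ a Banach space, that this space is separable under local integrability of $A$, and that $W^{1,A}(\Omega)$ inherits reflexivity from $L^{A}(\Omega)$.

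First I would verify the norm axioms for the Luxemburg functional. Positivity and positive homogeneity follow immediately from the definition. The triangle inequality uses the convexity of $A(x,\cdot)$: if $\|u\|_A<\lambda$ and $\|v\|_A<\mu$, writing $\lambda+\mu$ as a convex combination denominator gives
\begin{equation*}
\int_\Omega A\!\left(x,\frac{|u+v|}{\lambda+\mu}\right)dx \le \frac{\lambda}{\lambda+\mu}\int_\Omega A\!\left(x,\frac{|u|}{\lambda}\right)dx + \frac{\mu}{\lambda+\mu}\int_\Omega A\!\left(x,\frac{|v|}{\mu}\right)dx \le 1.
\end{equation*}
For completeness, I would take a Cauchy sequence $\{u_n\}$, extract a subsequence with $\|u_{n_{k+1}}-u_{n_k}\|_A\le 2^{-k}$, and use Fatou's lemma applied to the modular $\int_\Omega A(x,\cdot)\,dx$ to show that $\sum_k (u_{n_{k+1}}-u_{n_k})$ converges a.e. and in Luxemburg norm to the desired limit; this is the classical Riesz--Fischer strategy adapted to modulars.

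For separability I would exploit local integrability: since $A(\cdot,t_0)\in L^1_{\mathrm{loc}}(\Omega)$ for every $t_0>0$, characteristic functions of bounded measurable subsets of $\Omega$ belong to $L^{A}(\Omega)$. Taking an exhaustion of $\Omega$ by compacts, truncating heights, and invoking the standard countable generation of the $\sigma$-algebra restricted to each compact, one builds a countable family of simple functions with rational coefficients. Convergence to an arbitrary $u\in L^{A}(\Omega)$ is first obtained in the modular sense by dominated convergence applied to $A(x,|u-s_n|/\lambda)$ for any fixed $\lambda$ exceeding $\|u\|_A$, then upgraded to norm convergence by choosing $\lambda$ arbitrarily small and using assertion (3) of Proposition \ref{p2.3} style rescaling.

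Finally, for the reflexivity transfer I would define the isometric embedding
\begin{equation*}
T:W^{1,A}(\Omega)\longrightarrow L^{A}(\Omega)\times\bigl(L^{A}(\Omega)\bigr)^{N},\qquad Tu=(u,\partial_1 u,\dots,\partial_N u),
\end{equation*}
where the product is equipped with the sum norm $(v,w_1,\dots,w_N)\mapsto \|v\|_A+\sum_i\|w_i\|_A$, which is equivalent to $\|\cdot\|_{W^{1,A}(\Omega)}$. Since finite products and hence $L^{A}(\Omega)\times(L^{A}(\Omega))^N$ are reflexive whenever $L^{A}(\Omega)$ is, it suffices to verify that $T(W^{1,A}(\Omega))$ is norm-closed: if $(u_n,\nabla u_n)\to(u,v)$ in the product, then Proposition \ref{p2.3}(5)-type duality lets one pass to the limit in $\int_\Omega u_n\,\partial_i\varphi\,dx=-\int_\Omega(\partial_i u_n)\varphi\,dx$ for every $\varphi\in C_c^\infty(\Omega)$, identifying $v_i=\partial_i u$ in the weak sense. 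Closed subspaces of reflexive Banach spaces are reflexive, and an isometric isomorphism transfers this property back to $W^{1,A}(\Omega)$.

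The step I expect to be most delicate is separability, because without imposing $\Delta_2(\Omega)$ the equivalence between modular and norm convergence fails, so approximation by rational-valued simple functions must be carried out with careful control of a free scaling parameter $\lambda$; this is precisely where local integrability of $A$ is used, to guarantee that the dominating functions $A(x,t_0)\chi_K$ needed for dominated convergence actually lie in $L^1$.
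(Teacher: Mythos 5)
The paper itself offers no proof of this proposition; it is quoted from \cite{Liu:15}, so your attempt can only be judged on its own merits. The Banach-space part (Luxemburg norm axioms via convexity of $A(x,\cdot)$, completeness via a rapidly Cauchy subsequence and Fatou's lemma applied to the modular) and the reflexivity transfer (isometric embedding of $W^{1,A}(\Omega)$ onto a closed subspace of the reflexive product $L^{A}(\Omega)\times(L^{A}(\Omega))^{N}$) are the standard arguments and are essentially correct. The one point left implicit in the reflexivity step is the identification $v_{i}=\partial_{i}u$: you invoke a Proposition \ref{p2.3}(5)-type duality, but that item assumes $\Delta_{2}(\Omega)$ for both $A$ and $\widetilde A$, which is not among the hypotheses here; what you actually need is that norm convergence in $L^{A}$ forces convergence in $L^{1}_{\mathrm{loc}}$ (or a.e.\ along a subsequence), and that deserves its own short argument.

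The genuine gap is the separability step, and you have in fact put your finger on exactly where it breaks without repairing it. To conclude $\|u-s_{n}\|_{A}\to 0$ you must show that for \emph{every} $\varepsilon>0$ the modular $\int_{\Omega}A(x,|u-s_{n}|/\varepsilon)\,\mathrm{d}x$ is eventually at most $1$; the dominated convergence argument then requires the dominating function $A(x,2|u(x)|/\varepsilon)$ to be integrable for every $\varepsilon>0$, i.e.\ $u$ must belong to the subspace $E^{A}(\Omega)$ of functions whose modular is finite at every scale. Without $\Delta_{2}(\Omega)$ there exist $u\in L^{A}(\Omega)\setminus E^{A}(\Omega)$ (already for $\Omega=(0,1)$ and $A(t)=e^{t}-t-1$), and since $E^{A}(\Omega)$ is closed and contains the simple functions, such a $u$ keeps a strictly positive distance from any countable family of simple functions; no choice of the scaling parameter $\lambda$ can overcome this, so ``careful control of $\lambda$'' is not a fix. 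What local integrability actually delivers is separability of $E^{A}(\Omega)$, with $L^{A}(\Omega)=E^{A}(\Omega)$ only under $\Delta_{2}(\Omega)$ --- a hypothesis the paper does impose, via Condition $(\mathscr{A})$, wherever the proposition is actually used. Your argument should therefore either add $\Delta_{2}(\Omega)$ at this step or state the separability claim for $E^{A}(\Omega)$.
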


The following assumptions on $A\in N(\Omega)$ will be used.
\begin{enumerate}
\item[$(A_{1})$] $\inf\limits_{x\in\Omega}A(x,1)=c_{1}>0$;
\item[$(A_{2})$] For every $t_{0}>0$, there exists $c=c(t_{0})>0$ such that
\begin{equation*}
\frac{A(x,t)}{t}\geq c\text{ and }\frac{\widetilde{A}(x,t)}{t}\geq c,\forall\, t\geq t_{0},x\in\Omega.
\end{equation*}
\end{enumerate}

\begin{rem}
It is easy to see that $(A_{2})\Rightarrow(A_{1})$.
\end{rem}

\begin{prop}\label{embedd}(See \cite{Liu:15}.)
If $A\in N(\Omega)$ satisfies $(A_{1})$, then $L^{A}(\Omega)\hookrightarrow L^{1}(\Omega)$ and $W^{1,A}(\Omega)\hookrightarrow W^{1,1}(\Omega)$.
\end{prop}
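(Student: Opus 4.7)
The plan is to prove $L^{A}(\Omega)\hookrightarrow L^{1}(\Omega)$ directly from a pointwise inequality, and then obtain $W^{1,A}(\Omega)\hookrightarrow W^{1,1}(\Omega)$ by applying the same argument to $u$ and to $|\nabla u|$.

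First, I would exploit the convexity of $A(x,\cdot)$ together with $A(x,0)=0$, which implies that the function $t\mapsto A(x,t)/t$ is non-decreasing on $(0,+\infty)$ for a.e.\ $x\in\Omega$. Combined with $(A_{1})$, this gives
\begin{equation*}
\frac{A(x,t)}{t}\geq\frac{A(x,1)}{1}\geq c_{1}\qquad\text{for all }t\geq 1,\text{ a.e. }x\in\Omega,
\end{equation*}
so $t\leq A(x,t)/c_{1}$ whenever $t\geq 1$. Splitting into the cases $|u(x)|\leq 1$ and $|u(x)|>1$ and combining, I get the pointwise bound
\begin{equation*}
|u(x)|\leq 1+\frac{1}{c_{1}}A(x,|u(x)|)\qquad\text{for a.e. }x\in\Omega.
\end{equation*}

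Next, for arbitrary $u\in L^{A}(\Omega)$ and any $\lambda>\|u\|_{A}$, the Luxemburg norm definition yields $\int_{\Omega}A(x,|u(x)|/\lambda)\,\mathrm{d}x\leq 1$. Apply the pointwise inequality to $u/\lambda$ in place of $u$ and integrate over $\Omega$:
\begin{equation*}
\frac{1}{\lambda}\int_{\Omega}|u(x)|\,\mathrm{d}x\leq |\Omega|+\frac{1}{c_{1}}\int_{\Omega}A\Bigl(x,\tfrac{|u(x)|}{\lambda}\Bigr)\,\mathrm{d}x\leq |\Omega|+\frac{1}{c_{1}}.
\end{equation*}
Letting $\lambda\downarrow\|u\|_{A}$ gives $\|u\|_{L^{1}(\Omega)}\leq\bigl(|\Omega|+1/c_{1}\bigr)\|u\|_{A}$, which is the desired continuous embedding $L^{A}(\Omega)\hookrightarrow L^{1}(\Omega)$.

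For the Sobolev-level statement, any $u\in W^{1,A}(\Omega)$ satisfies $u,|\nabla u|\in L^{A}(\Omega)$, and applying the embedding twice yields
\begin{equation*}
\|u\|_{W^{1,1}(\Omega)}=\|u\|_{L^{1}(\Omega)}+\|\nabla u\|_{L^{1}(\Omega)}\leq\Bigl(|\Omega|+\tfrac{1}{c_{1}}\Bigr)\bigl(\|u\|_{A}+\|\nabla u\|_{A}\bigr)=\Bigl(|\Omega|+\tfrac{1}{c_{1}}\Bigr)\|u\|,
\end{equation*}
producing $W^{1,A}(\Omega)\hookrightarrow W^{1,1}(\Omega)$. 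The only place where care is genuinely needed is the monotonicity of $A(x,t)/t$ used in the very first step; everything else reduces to the definition of the Luxemburg norm and a splitting argument. No appeal to $\Delta_{2}(\Omega)$, the complementary function $\widetilde{A}$, or H\"older's inequality is required, which is why $(A_{1})$ alone suffices.
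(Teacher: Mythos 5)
Your proof is correct. Note that the paper itself gives no proof of this proposition --- it is quoted from Liu--Zhao \cite{Liu:15} --- and your convexity-plus-splitting argument is precisely the standard one. Two points are worth making explicit: the monotonicity of $t\mapsto A(x,t)/t$ rests on the convexity of $A(x,\cdot)$ together with $A(x,0)=0$, which is part of what it means to be an $N$-function even though the paper's displayed definition records only the two limit conditions; and the bound $\|u\|_{L^{1}(\Omega)}\leq\bigl(|\Omega|+1/c_{1}\bigr)\|u\|_{A}$ uses $|\Omega|<+\infty$, which is covered by the paper's standing assumption that $\Omega$ is bounded. With those provisos, the pointwise inequality, the passage $\lambda\downarrow\|u\|_{A}$, and the reduction of the $W^{1,1}$ embedding to two applications of the $L^{1}$ embedding are all sound.
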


\begin{prop}\label{p2.6}(See \cite{Fan:12}.)
Let $A\in N(\Omega)$, both $A$ and $\widetilde{A}$ be locally integrable and satisfy $\Delta_{2}(\Omega)$ and $(A_{2})$. Then $L^{A}(\Omega)$ is reflexive, and the mapping $J:L^{\widetilde{A}}(\Omega)\to (L^{A}(\Omega))^{*}$ defined by
\begin{equation*}
\langle J(v),w\rangle=\int_{\Omega}v(x)w(x)\,\mathrm{d}x,\quad\forall\, v\in L^{\widetilde{A}}(\Omega),w\in L^{A}(\Omega)
\end{equation*}
is a linear isomorphism and $\|J(v)\|_{(L^{A}(\Omega))^{*}}\leq 2\|v\|_{L^{\widetilde{A}}(\Omega)}$.
\end{prop}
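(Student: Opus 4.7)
The plan is to split the statement into four pieces: continuity of $J$, injectivity of $J$, surjectivity of $J$, and reflexivity of $L^A(\Omega)$. The continuity bound $\|J(v)\|_{(L^A(\Omega))^*}\leq 2\|v\|_{L^{\widetilde A}(\Omega)}$ is essentially already proved: given $v\in L^{\widetilde A}(\Omega)$ and $w\in L^A(\Omega)$, Proposition \ref{p2.3}(5) gives $|\langle J(v),w\rangle|\leq 2\|v\|_{\widetilde A}\|w\|_A$, so $J$ is a well-defined bounded linear map, and linearity is obvious from the definition. Injectivity is also quick: if $J(v)=0$, then $\int_\Omega vw\,\mathrm{d}x=0$ for every $w\in L^A(\Omega)$. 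Since $A$ satisfies $\Delta_2(\Omega)$ and $(A_1)$, by Propositions \ref{embedd} and \ref{p2.3}, bounded measurable functions with compact support lie in $L^A(\Omega)$; choosing $w=\mathrm{sign}(v)\chi_E$ for arbitrary measurable $E$ of finite measure forces $v=0$ a.e.

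The serious part of the argument is the surjectivity of $J$, i.e.\ the Riesz-type representation. Given $\varphi\in(L^A(\Omega))^*$, I would define the set function $\nu(E):=\varphi(\chi_E)$ for measurable $E\subset\Omega$ with $|E|<\infty$; the indicator $\chi_E$ lies in $L^A(\Omega)$ thanks to local integrability of $A$. Countable additivity of $\nu$ follows from continuity of $\varphi$ together with the dominated convergence–type bound given by $\Delta_2(\Omega)$ and Proposition \ref{p2.3}(3)--(4), and $\nu\ll\mathrm{Lebesgue}$ because $\chi_E=0$ in $L^A$ whenever $|E|=0$. The Radon--Nikodym theorem therefore produces a measurable $v$ with $\varphi(\chi_E)=\int_E v\,\mathrm{d}x$, hence $\varphi(w)=\int_\Omega vw\,\mathrm{d}x$ for every simple $w$. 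The $\Delta_2$ condition makes simple functions dense in $L^A(\Omega)$ (a standard consequence of Proposition \ref{p2.3}(3)--(4)), so it suffices to verify $v\in L^{\widetilde A}(\Omega)$ with $\|v\|_{\widetilde A}\leq \|\varphi\|$.

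To control $\|v\|_{\widetilde A}$ I would truncate: set $v_n:=v\chi_{\{|v|\leq n\}\cap B_n}$ and use test functions $w_n:=a_+^{-1}(x,|v_n(x)|)\,\mathrm{sign}(v_n(x))$, which are bounded and compactly supported, hence in $L^A(\Omega)$. The Young equality case of Proposition 2 gives $v_n w_n=A(x,|w_n|)+\widetilde A(x,|v_n|)$ pointwise; combining with $\varphi(w_n)=\int_\Omega v w_n\,\mathrm{d}x\geq \int_\Omega v_n w_n\,\mathrm{d}x$ and the Hölder-type inequality Proposition \ref{p2.3}(5) yields an estimate of the form $\int_\Omega \widetilde A(x,|v_n|)\,\mathrm{d}x\leq C\|\varphi\|\|v_n\|_{\widetilde A}$, which via Proposition \ref{p2.3}(3) bounds $\|v_n\|_{\widetilde A}$ uniformly; letting $n\to\infty$ and using Fatou gives $v\in L^{\widetilde A}(\Omega)$. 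This is the step I expect to be the main obstacle, because the pointwise Young equality must be handled carefully when $A(x,\cdot)$ is only known to be an $N$-function in the generalized sense and $a_+^{-1}$ may have jumps.

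Finally, once $J:L^{\widetilde A}(\Omega)\to (L^A(\Omega))^*$ is shown to be a linear isomorphism, reflexivity of $L^A(\Omega)$ follows by symmetry: since $A=\widetilde{\widetilde A}$ and both $A,\widetilde A$ satisfy the same hypotheses ($\Delta_2(\Omega)$, $(A_2)$, local integrability), applying the isomorphism result with the roles of $A$ and $\widetilde A$ swapped gives $\widetilde J:L^A(\Omega)\to (L^{\widetilde A}(\Omega))^*$ an isomorphism. One then checks that the composition $(J^*)^{-1}\circ \widetilde J$ coincides with the canonical embedding $L^A(\Omega)\hookrightarrow (L^A(\Omega))^{**}$, so the latter is onto, proving reflexivity.
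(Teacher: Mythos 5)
This proposition is one the paper does not prove at all: it is imported verbatim from \cite{Fan:12} (which in turn rests on the classical duality theory for Musielak--Orlicz spaces, cf.\ Musielak's monograph and \cite{Adams:03}), so there is no in-paper argument to compare against. Judged on its own, your outline is the standard and correct proof: H\"older for the norm bound, indicators for injectivity, Radon--Nikodym plus density of simple functions (which is where $\Delta_2(\Omega)$ is essential) for surjectivity, the Young-equality test functions $w_n=a_+^{-1}(x,|v_n|)\,\mathrm{sign}(v_n)$ for the modular estimate, and the symmetric application of the isomorphism (using $\widetilde{\widetilde A}=A$) for reflexivity. The final step $(J^*)^{-1}\circ\widetilde J=\iota$ does give surjectivity of the canonical embedding, so the reflexivity deduction is sound.

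Two technical points deserve more care than your sketch gives them. First, $w_n$ need not be pointwise bounded: $a_+^{-1}(x,s)$ for $s\le n$ can be large in $x$ even under $(A_2)$; what you actually need is $w_n\in L^A(\Omega)$, which follows from $A(x,a_+^{-1}(x,s))\le s\,a_+^{-1}(x,s)\le\widetilde A(x,2s)$ together with the local integrability of $\widetilde A$ and $\Delta_2$ --- and then $\varphi(w_n)=\int_\Omega v\,w_n\,\mathrm{d}x$ must be justified by a further truncation/monotone-convergence step, since at that stage the representation is only known on simple functions. Second, in the modular estimate the relevant inequality is $\|w_n\|_A\le 1+\int_\Omega A(x,|w_n|)\,\mathrm{d}x$ combined with a normalization of $\varphi$ (reduce to $\|\varphi\|\le 1$ by scaling), rather than the H\"older inequality of Proposition \ref{p2.3}(5), which would reintroduce the unknown $\|v_n\|_{\widetilde A}$ on the wrong side. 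Neither issue breaks the argument; both are handled by the standard Orlicz-space bookkeeping.
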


To give the embedding theorems, the following assumptions are introduced.
\begin{enumerate}
\item[(P1)] $\Omega\subset \mathbb{R}^{N}(N\geq2)$ is a bounded domain with the cone property, and $A\in N(\Omega)$;
\item[(P2)] $A:\overline{\Omega}\times \mathbb{R}\to[0,+\infty)$ is continuous and $A(x,t)\in(0,+\infty)$ for $x\in\overline{\Omega}$ and $t\in(0,+\infty)$.
\item[(P3)] Suppose $A\in N(\Omega)$ satisfies
\begin{equation}
\int_0^1\frac{A^{-1}(x,t)}{t^{\frac{n+1}{n}}}\mathrm{d} t<+\infty,\,\forall x\in\overline\Omega,
\end{equation}
replacing, if necessary, $A$ by another $N(\Omega)$-function equivalent to $A$ near infinity.
\end{enumerate}

Under assumption (P1), (P2) and (P3), for each $x\in\overline{\Omega}$ we can define $A_{*}(x,t)$ for $x\in\Omega$ and $t\in\mathbb{R}^N$ as in \cite{Liu:16}. We call $A_{*}$ the Sobolev conjugate function of $A$.  Set
\begin{equation}\label{T(x)}
 T(x):= \lim\limits_{s\to+\infty}A_{*}^{-1}(x,s).
\end{equation}
Then $A_{*}(x,\cdot)\in C^{1}(0,T(x))$. Furthermore for $A\in N(\Omega)$ and $T(x)=+\infty$ for any $x\in\overline{\Omega}$, it is known that $A_{*}\in N(\Omega)$ (see in \cite{Adams:03}).

Let $X$ be a metric space and $f:X\rightarrow(-\infty,+\infty]$ be
an extended real-valued function. For $x\in X$ with $f(x)\in
\mathbb{R}$, the continuity of $f$ at $x$ is well defined. For $x\in
X$ with $f(x)=+\infty$, we say that $f$ is continuous at $x$ if
given any $M>0$, there exists a neighborhood $U$ of $x$ such that
$f(y)>M$ for all $y\in U$. We say that
$f:X\rightarrow(-\infty,+\infty]$ is continuous on $X$ if $f$ is
continuous at every $x\in X$. Define Dom$(f)=\{x\in X :
f(x)\in\mathbb{R}\}$ and denote by $C^{1-0}(X)$ the set of all
locally Lipschitz continuous real-valued functions defined on $X$.

The following assumptions will also be used.
\begin{enumerate}
\item[(P4)] $T:\overline{\Omega}\to[0,+\infty]$ is continuous on $\overline{\Omega}$ and $T\in C^{1-0}(\text{Dom}(T))$;
\item[(P5)] $A_{*}\in C^{1-0}(\text{Dom}(A_{*}))$ and there exist positive constants $\delta_{0}<\frac{1}{N}, C_{0}$ and $0<t_{0}<\min\limits_{x\in\overline{\Omega}}T(x)$ such that
\begin{equation*}
|\nabla_{x}A_{*}(x,t)|\leq C_{0}(A_{*}(x,t))^{1+\delta_{0}},\quad j=1,\cdots,N,
\end{equation*}
for $x\in \Omega$ and $|t|\in[t_{0},T(x))$ provided $\nabla_{x}A_{*}(x,t)$ exists.
\end{enumerate}

Let $A,B\in N(\Omega)$, we say that $A\ll B$ if, for any $k>0$,
\begin{equation*}
\lim\limits_{t\to+\infty}\frac{A(x,kt)}{B(x,t)}=0\text{ uniformly for }x\in\Omega.
\end{equation*}

 The following theorems are embedding theorems for Musielak-Sobolev spaces developed by Fan in \cite{1Fan:12} and Liu and Zhao in \cite{Liu:16}.
\begin{thm}(See \cite{1Fan:12}.)
Let (P1)-(P5) holds. Then
\begin{enumerate}
\item[(1)] There is a continuous embedding $W^{1,A}(\Omega)\hookrightarrow L^{A_{*}}(\Omega)$;
\item[(2)] Suppose that $B\in N(\Omega)$, $B:\overline{\Omega}\times[0,+\infty)\to[0,+\infty)$ is continuous, and $B(x,t)\in (0,+\infty)$ for $x\in\Omega$ and $t>0$. If $B\ll A_{*}$, then there is a compact embedding $W^{1,A}(\Omega)\hookrightarrow\hookrightarrow L^{B}(\Omega)$.
\end{enumerate}
\end{thm}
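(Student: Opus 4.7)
For part (1), my plan is to mimic the Adams-style derivation of the Orlicz--Sobolev embedding, paying careful attention to the $x$-dependence of $A_{*}$. First I would use the cone property of $\Omega$ to extend $u\in W^{1,A}(\Omega)$ to a compactly supported $\tilde u\in W^{1,A}(\mathbb{R}^{N})$ with norm controlled by $\|u\|$; since $A,\widetilde A$ satisfy $\Delta_{2}(\Omega)$, smooth functions are dense, so it suffices to treat $u\in C_{c}^{\infty}(\mathbb{R}^{N})$. Then I would use the elementary radial representation
\begin{equation*}
|u(x)|\leq C_{N}\int_{\mathbb{R}^{N}}\frac{|\nabla u(y)|}{|x-y|^{N-1}}\,\mathrm{d}y,
\end{equation*}
split the kernel into near/far parts, and optimize the splitting radius to derive a pointwise bound on $u(x)$ in terms of $A^{-1}(x,\cdot)$ evaluated at quantities involving $\int A(y,|\nabla u(y)|)\,\mathrm{d}y$. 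Inverting through the defining integral $A_{*}^{-1}(x,t)=\int_{0}^{t}A^{-1}(x,s)\,s^{-(N+1)/N}\,\mathrm{d}s$, this should yield a modular estimate of the form $\int_{\Omega}A_{*}(x,|u(x)|/\lambda)\,\mathrm{d}x\leq 1$ for some $\lambda\sim\|u\|$, which by Proposition \ref{p2.3}(2) is the continuous embedding.

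For part (2), I would combine (1) with a standard compactness--equi-integrability argument. Given a bounded sequence $\{u_{n}\}\subset W^{1,A}(\Omega)$, Proposition \ref{embedd} gives $W^{1,A}(\Omega)\hookrightarrow W^{1,1}(\Omega)$, so classical Rellich--Kondrachov extracts a subsequence (still denoted $u_{n}$) converging in $L^{1}(\Omega)$ to some $u$ and a.e.\ on $\Omega$. Part (1) ensures $\sup_{n}\int_{\Omega}A_{*}(x,|u_{n}-u|)\,\mathrm{d}x<+\infty$. Fixing $\lambda>0$ and $\varepsilon>0$, the hypothesis $B\ll A_{*}$ furnishes $R>0$ such that $B(x,t/\lambda)\leq \varepsilon A_{*}(x,t)$ whenever $|t|\geq R$ uniformly in $x$. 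Splitting
\begin{equation*}
\int_{\Omega}B\!\left(x,\tfrac{|u_{n}-u|}{\lambda}\right)\mathrm{d}x
=\int_{\{|u_{n}-u|\leq R\}}+\int_{\{|u_{n}-u|>R\}},
\end{equation*}
the second piece is $\leq C\varepsilon$, and the first piece vanishes as $n\to\infty$ by the dominated convergence theorem (the integrand is bounded by $\sup_{x\in\overline\Omega}B(x,R/\lambda)<\infty$ by continuity). Letting $\varepsilon\to 0$ and using Proposition \ref{p2.3}(3) yields $\|u_{n}-u\|_{B}\to 0$.

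The main obstacle is the first step of part (1): both the extension operator $W^{1,A}(\Omega)\to W^{1,A}(\mathbb{R}^{N})$ and the Riesz-potential estimate must respect the full $x$-dependence of $A$ and $A_{*}$. The point of assumptions (P4) and (P5) is precisely to quantify how $A_{*}(x,t)$ can oscillate in $x$: the bound $|\nabla_{x}A_{*}|\leq C_{0}A_{*}^{1+\delta_{0}}$ together with the local Lipschitz control on $T$ allows one to freeze $x$ in the Riesz estimate and then transfer the result to a uniform global inequality by a covering argument on $\overline{\Omega}$. Without those hypotheses the single-point estimates do not patch together.
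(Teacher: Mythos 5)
This theorem is imported verbatim from \cite{1Fan:12}; the present paper supplies no proof of it, so there is no internal argument to compare yours against. Judged on its own terms, your sketch follows the expected Donaldson--Trudinger/Adams route that Fan's proof also takes (Riesz-potential estimate with frozen $x$, then patching over $\overline\Omega$ using (P4)--(P5) to control the $x$-oscillation of $A_*$), and your reading of the role of (P4)--(P5) is accurate. Two steps, however, do not go through as written.

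First, in part (1) you invoke density of smooth functions ``since $A,\widetilde A$ satisfy $\Delta_2(\Omega)$''; but $\Delta_2$ is not among the hypotheses (P1)--(P5) of this theorem, so that reduction is not available here and the estimate must be run without it. Second, in part (2) the claim $\sup_n\int_\Omega A_*(x,|u_n-u|)\,\mathrm{d}x<+\infty$ does not follow from part (1): boundedness $\|u_n-u\|_{A_*}\le M$ only yields $\int_\Omega A_*(x,|u_n-u|/M)\,\mathrm{d}x\le 1$, and without a $\Delta_2$ condition on $A_*$ the unnormalized modular may well be infinite. The repair is standard --- run the splitting on $B(x,|u_n-u|/\sigma)=B\bigl(x,(M/\sigma)\cdot|u_n-u|/M\bigr)\le\varepsilon\, A_*(x,|u_n-u|/M)$ for $|u_n-u|/M\ge R$, using $B\ll A_*$ with $k=M/\sigma$ --- and is exactly what Lemma \ref{cedu} (Lemma 3.9 of \cite{1Fan:12}), recorded in this very paper, packages: a sequence bounded in $L^{A_*}(\Omega)$ and convergent in measure converges in $L^{B}(\Omega)$. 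Your part (2) would be correct, and shorter, if you simply combined part (1), Proposition \ref{embedd} together with Rellich--Kondrachov in $W^{1,1}(\Omega)$, and Lemma \ref{cedu}; this is precisely the scheme the authors themselves use to prove their Theorem \ref{New_Bdtr} on the boundary.
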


To give the boundary trace embedding theorem, we need the following assumption:
\begin{enumerate}
\item[$(P_{\partial}^{1})$] $A$ is an $N(\Omega)$-function such that $T(x)$ defined in Equation (\ref{T(x)}) satisfies
\begin{equation}\label{P^{1}}
T(x)=+\infty, \forall x\in \overline{\Omega}.
\end{equation}
\end{enumerate}

The following theorem is trace embedding theorem in \cite{Liu:16}.

\begin{thm}\label{liu1}(See \cite{Liu:16}, Theorem 4.2.)
Let $\Omega\subset \mathbb{R}^{N}$ be an open bounded domain with Lipschitz boundary. Suppose that $A\in N(\Omega)$ satisfies $(P1)$-$(P5)$ and $(P^{1}_{\partial})$. Then there is a continuous boundary trace embedding $W^{1,A}(\Omega)\hookrightarrow L^{A_{*}^{\frac{N-1}{N}}}(\partial\Omega)$, in which $A_{*}^{\frac{N-1}{N}}(x,t)=[A_{*}(x,t)]^{\frac{N-1}{N}},\forall x\in\partial \Omega,t\in \mathbb{R}^{+}$.
\end{thm}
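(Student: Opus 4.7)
The strategy is a Gauss--Green style trace estimate adapted to the Musielak--Orlicz setting, built on top of the interior Sobolev embedding $W^{1,A}(\Omega)\hookrightarrow L^{A_*}(\Omega)$ that is already available. Since $A$ and $\widetilde A$ satisfy $\Delta_2(\Omega)$, smooth functions are dense in $W^{1,A}(\Omega)$, so after a homogeneity rescaling it suffices to establish the modular bound
\begin{equation*}
\int_{\partial\Omega}A_*^{\frac{N-1}{N}}(x,|u(x)|)\,\mathrm{d}\sigma\le C
\end{equation*}
for every $u\in C^\infty(\overline\Omega)$ with $\|u\|_{W^{1,A}(\Omega)}\le 1$. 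Hypothesis $(P^1_\partial)$ is used here to ensure that $A_*(x,\cdot)$ is an $N$-function on all of $[0,+\infty)$, so that $A_*^{(N-1)/N}(x,\cdot)$ is itself an $N$-function and the target Musielak space $L^{A_*^{(N-1)/N}}(\partial\Omega)$ is well defined.

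Because $\partial\Omega$ is Lipschitz, the plan is to extend the outward unit normal to a bounded Lipschitz vector field $V$ on $\overline\Omega$ with $V=n$ on $\partial\Omega$. The divergence theorem then rewrites the boundary integral as
\begin{equation*}
\int_{\partial\Omega}A_*^{\frac{N-1}{N}}(x,|u|)\,\mathrm{d}\sigma
=\int_\Omega\mathrm{div}\bigl(A_*^{\frac{N-1}{N}}(x,|u|)\,V\bigr)\,\mathrm{d}x,
\end{equation*}
and the chain rule decomposes the integrand into three pieces: a pure $x$-derivative piece $(\nabla_x A_*^{(N-1)/N})(x,|u|)\cdot V$; a chain-rule piece $\tfrac{N-1}{N}A_*^{-1/N}(x,|u|)\,a_*(x,|u|)\,(\nabla|u|)\cdot V$, where $a_*=\partial_t A_*$; and a lower-order piece $A_*^{(N-1)/N}(x,|u|)\,\mathrm{div}\,V$.

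The third piece is handled by the elementary inequality $A_*^{(N-1)/N}\le A_*+1$ together with $\int_\Omega A_*(x,|u|)\,\mathrm{d}x\le C$ from the interior embedding. For the second piece I would apply Young's inequality $ab\le\widetilde A(x,a)+A(x,b)$ with $b=|\nabla u|$ and $a$ equal to the coefficient in front of $\nabla|u|$; the defining relation of the Sobolev conjugate, namely $A_*^{-1}(x,t)=\int_0^t A^{-1}(x,s)\,s^{-(N+1)/N}\,\mathrm{d}s$, forces the pointwise bound $\widetilde A(x,A_*^{-1/N}(x,s)\,a_*(x,s))\le C\,A_*(x,s)$, which reduces this term to $\int_\Omega A(x,|\nabla u|)\,\mathrm{d}x+\int_\Omega A_*(x,|u|)\,\mathrm{d}x\le C$.

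The main obstacle is the first piece, and this is exactly where assumption (P5) enters. From $|\nabla_x A_*(x,t)|\le C_0\,A_*(x,t)^{1+\delta_0}$ on $\{|t|\ge t_0\}$ one obtains $|\nabla_x A_*^{(N-1)/N}(x,|u|)|\le C\,A_*(x,|u|)^{(N-1)/N+\delta_0}$ on $\{|u|\ge t_0\}$; the strict inequality $\delta_0<1/N$ makes the exponent $(N-1)/N+\delta_0$ strictly less than $1$, so this contribution is dominated by $C+C\int_\Omega A_*(x,|u|)\,\mathrm{d}x$. On the complementary set $\{|u|<t_0\}$ the integrand is uniformly bounded by the continuity afforded by (P4), contributing at most $C|\Omega|$. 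Assembling the three estimates closes the argument and yields the continuous trace embedding.
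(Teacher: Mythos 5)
This theorem is imported from \cite{Liu:16} and the paper itself contains no proof of it, so the only meaningful comparison is with the argument in that reference (and the closely related interior embedding proof of \cite{1Fan:12}). Your architecture is essentially that argument: reduce the trace bound to showing that $v(x):=A_*^{\frac{N-1}{N}}(x,|u(x)|)$ lies in $W^{1,1}(\Omega)$ with norm controlled by $\|u\|_{W^{1,A}(\Omega)}$, and then invoke the $W^{1,1}(\Omega)\to L^{1}(\partial\Omega)$ trace; your Gauss--Green identity with an extended normal field is exactly this step in disguise. The three-term decomposition and the two key estimates --- $(P5)$ with $\delta_0<\frac1N$ to tame $\nabla_x A_*^{\frac{N-1}{N}}$, and Young's inequality combined with the pointwise bound $\widetilde A\big(x,A_*^{-1/N}(x,s)a_*(x,s)\big)\le A_*(x,s)$ coming from the defining integral for $A_*^{-1}$ --- are precisely the ones that make the proof work.

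Two steps need repair. First, the reduction via ``smooth functions are dense in $W^{1,A}(\Omega)$ because $A,\widetilde A\in\Delta_2$'' is not available: $\Delta_2$ is not among the hypotheses of this theorem, and even granting it, density of $C^\infty(\overline\Omega)$ in Musielak--Sobolev spaces is delicate and can fail without additional regularity of $A$ in $x$. The standard fix is to avoid density altogether: $(P1)$--$(P2)$ give $W^{1,A}(\Omega)\hookrightarrow W^{1,1}(\Omega)$, the chain rule for $W^{1,1}$ functions composed with the locally Lipschitz map $A_*^{\frac{N-1}{N}}$ (guaranteed by $(P5)$) applies directly, and the $W^{1,1}$ boundary trace finishes the reduction. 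Second, the claim $\int_\Omega A_*(x,|u|)\,\mathrm{d}x\le C$ ``from the interior embedding'' does not follow from the norm embedding $W^{1,A}(\Omega)\hookrightarrow L^{A_*}(\Omega)$ unless $A_*\in\Delta_2$; you must carry the interior \emph{modular} estimate (which is what \cite{1Fan:12} actually establishes for the normalized function) through the computation, i.e.\ run the whole argument for $u/(\lambda\|u\|)$ with a fixed dilation $\lambda$, and the same care is needed for your assertion that $(P^1_\partial)$ alone makes $A_*^{\frac{N-1}{N}}$ an $N$-function. Neither repair alters your estimates, so the proposal is a correct skeleton of the source's proof rather than a genuinely different route.
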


The following theorem is a compact trace embedding theorem in \cite{Liu:16}.

\begin{thm}\label{liu2}(See \cite{Liu:16}, Theorem 4.4.)
Let $\Omega\subset \mathbb{R}^{N}$ be an open bounded domain with Lipschitz boundary. Suppose $A\in N(\Omega)$ satisfies $(P1)$-$(P5)$ and $(P^{1}_{\partial})$.
There exist two constants $\epsilon>0$ and $\delta>0$ such that
$A_{*}^{\frac{N-1}{N}-\frac{\epsilon}{2}}\in\Delta_{2}(\overline{\Omega}_{\delta})$ and $A_{*}^{\frac{N-1}{N}-\epsilon}\in N(\overline{\Omega}_{\delta})$. Then for any $\theta\in N(\partial\Omega)$ satisfies
\begin{equation}\label{2.1}
  \theta(x,t)\leq A_{*}^{\frac{N-1}{N}-\epsilon}(x,t) \text{ for any }t>1\text{ and }x\in\partial\Omega,
\end{equation}
the boundary trace embedding $W^{1,A}(\Omega)\hookrightarrow L^{\theta}(\partial\Omega)$ is compact.
\end{thm}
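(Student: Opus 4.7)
The plan is to combine the continuous trace embedding of Theorem \ref{liu1} with the classical compact trace $W^{1,1}(\Omega)\hookrightarrow\hookrightarrow L^1(\partial\Omega)$ (available since $W^{1,A}(\Omega)\hookrightarrow W^{1,1}(\Omega)$ by Proposition \ref{embedd}), and then upgrade $L^1$ boundary convergence to $L^\theta$ convergence using the growth gap built into hypothesis (\ref{2.1}). Concretely, I take a bounded sequence $\{u_n\}\subset W^{1,A}(\Omega)$, pass to a weakly convergent subsequence $u_n\rightharpoonup u$, and use the classical Lipschitz trace theorem together with Proposition \ref{embedd} to extract a further subsequence along which $u_n\to u$ in $L^1(\partial\Omega)$ and a.e.\ on $\partial\Omega$. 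By Theorem \ref{liu1} the sequence $\{u_n-u\}$ is uniformly bounded in $L^{A_*^{\frac{N-1}{N}}}(\partial\Omega)$.

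Fix $\lambda>0$ and set $v_n:=(u_n-u)/\lambda$. To prove $\|u_n-u\|_{L^\theta(\partial\Omega)}\to 0$ it suffices, by the Luxemburg definition, to establish the modular convergence $\int_{\partial\Omega}\theta(x,|v_n|)\,\d\sigma\to 0$. Comparing exponents on $\{A_*\geq 1\}$ gives $A_*^{\frac{N-1}{N}-\frac{\epsilon}{2}}\preccurlyeq A_*^{\frac{N-1}{N}}$, so $\{v_n\}$ is bounded in $L^{A_*^{\frac{N-1}{N}-\frac{\epsilon}{2}}}(\partial\Omega)$; by the $\Delta_2$ hypothesis on this intermediate function, the norm bound upgrades to a uniform modular bound $\int_{\partial\Omega}A_*^{\frac{N-1}{N}-\frac{\epsilon}{2}}(x,|v_n|)\,\d\sigma\leq M$. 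The heart of the proof is equi-integrability of $\{\theta(\cdot,|v_n|)\}$ in $L^1(\partial\Omega)$: for measurable $E\subset\partial\Omega$ and $R>1$ chosen so that $A_*(\cdot,R)\geq 1$ on $\partial\Omega$, split at the level $|v_n|=R$. On $E\cap\{|v_n|\leq R\}$ one has $\theta(x,|v_n|)\leq\theta(x,R)\leq C(R)$ by continuity on the compact set $\partial\Omega$, contributing $\leq C(R)|E|$; on $E\cap\{|v_n|>R\}$ hypothesis (\ref{2.1}), the identity $A_*^{\frac{N-1}{N}-\epsilon}(x,t)=A_*^{\frac{N-1}{N}-\frac{\epsilon}{2}}(x,t)\cdot[A_*(x,t)]^{-\frac{\epsilon}{2}}$, and the monotonicity of $A_*(x,\cdot)$ in the second variable yield
\bes
\theta(x,|v_n|)\leq\bigl[\inf_{y\in\partial\Omega}A_*(y,R)\bigr]^{-\frac{\epsilon}{2}}A_*^{\frac{N-1}{N}-\frac{\epsilon}{2}}(x,|v_n|),
\ees
so this piece contributes at most $M\,[\inf_{\partial\Omega}A_*(\cdot,R)]^{-\frac{\epsilon}{2}}$, which tends to $0$ as $R\to\infty$ by a standard compactness/continuity argument showing $\inf_{y\in\partial\Omega}A_*(y,R)\to+\infty$.

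Choosing $R$ large first and then $|E|$ small gives equi-integrability. Since $v_n\to 0$ a.e.\ on $\partial\Omega$ and $\theta(x,\cdot)$ is continuous with $\theta(x,0)=0$, we also have $\theta(x,|v_n(x)|)\to 0$ a.e., so Vitali's convergence theorem delivers $\int_{\partial\Omega}\theta(x,|v_n|)\,\d\sigma\to 0$. Since $\lambda$ was arbitrary, $\|u_n-u\|_{L^\theta(\partial\Omega)}\to 0$, establishing the compact embedding. The main technical obstacle is the gap estimate in the second paragraph: the hypothesis must introduce the intermediate exponent $\frac{N-1}{N}-\frac{\epsilon}{2}$ precisely because $A_*^{\frac{N-1}{N}}$ itself may fail $\Delta_2(\overline\Omega_\delta)$, and the factor $[A_*(x,|v_n|)]^{-\frac{\epsilon}{2}}$ is exactly what converts the uniform norm bound in $L^{A_*^{\frac{N-1}{N}}}$ given by Theorem \ref{liu1} into uniform smallness of the tails $\{|v_n|>R\}$ required for Vitali.
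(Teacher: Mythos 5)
Your argument is correct in substance, but note first that the paper does not actually prove Theorem \ref{liu2}: it is quoted from \cite{Liu:16}, and the paper instead proves the strictly stronger Theorem \ref{New_Bdtr}, observing afterwards that (\ref{2.1}) implies $\theta\ll A_{*}^{\frac{N-1}{N}}$ so that Theorem \ref{liu2} becomes a corollary. Your first two steps coincide exactly with the proof of Theorem \ref{New_Bdtr}: extract a subsequence convergent in $L^{1}(\partial\Omega)$ (hence in measure and a.e.) via $W^{1,A}(\Omega)\hookrightarrow W^{1,1}(\Omega)\hookrightarrow\hookrightarrow L^{1}(\partial\Omega)$ from Proposition \ref{embedd}, and invoke Theorem \ref{liu1} for uniform boundedness in $L^{A_{*}^{\frac{N-1}{N}}}(\partial\Omega)$. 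The routes diverge only at the upgrade step: the paper outsources it to Lemma \ref{cedu} (boundedness in $L^{A}$ plus convergence in measure plus $B\ll A$ yields convergence in $L^{B}$), which needs only the qualitative relation $\theta\ll A_{*}^{\frac{N-1}{N}}$ and no $\Delta_{2}$ assumption on any intermediate function, whereas you run a hands-on Vitali argument whose equi-integrability estimate genuinely consumes the two extra hypotheses of Theorem \ref{liu2}, namely the pointwise bound (\ref{2.1}) to peel off the factor $[A_{*}(x,t)]^{-\frac{\epsilon}{2}}$ on the tails, and $A_{*}^{\frac{N-1}{N}-\frac{\epsilon}{2}}\in\Delta_{2}(\overline{\Omega}_{\delta})$ to convert the norm bound into a uniform modular bound. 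Your proof is therefore more self-contained (it essentially reproves the content of Lemma \ref{cedu} in this special case) but less general, which is precisely the point of the paper: those extra hypotheses are superfluous. One small repair: Theorem \ref{liu2} does not assume $\theta$ continuous in $x$, so the bound $\theta(x,|v_{n}|)\leq\theta(x,R)\leq C(R)$ on the set $\{|v_{n}|\leq R\}$ should be obtained from (\ref{2.1}) together with the continuity of $A_{*}$ on $\overline{\Omega}\times[0,+\infty)$, i.e. $\theta(x,R)\leq\sup_{y\in\partial\Omega}A_{*}^{\frac{N-1}{N}-\epsilon}(y,R)<+\infty$ for $R>1$, rather than from continuity of $\theta$ itself.
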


We claim that Condition (\ref{2.1}) is strong for the compact embedding in Theorem \ref{liu2}. In fact, some more weaker assumptions are needed instead in the theorem. We give a stronger result of the compact trace embedding theorem in Musielak-Sobolev sapces as follows.

\begin{thm}\label{New_Bdtr}
Suppose $\Omega\subset \mathbb{R}^{N}$ is a bounded domain with Lipschitz boundary.  $A\in N(\Omega)$ satisfies $(A_2)$, $(P1)$-$(P5)$ and $(P^{1}_{\partial})$.
Then for any continuous $\theta\in N(\partial\Omega)$ satisfying $\theta(x,t)\in(0,+\infty)$  for any $x\in\partial \Omega$, $t>0$ and $\theta\ll A_{*}^{\frac{N-1}{N}}$,
the boundary trace embedding $W^{1,A}(\Omega)\hookrightarrow L^{\theta}(\partial\Omega)$ is compact.
\end{thm}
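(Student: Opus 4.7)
The plan is to extract pointwise almost-everywhere convergence on $\partial\Omega$ from a coarser known compact trace embedding, and then upgrade it to convergence in $L^{\theta}(\partial\Omega)$ by a direct modular estimate that replaces the $\Delta_2$/domination argument of Theorem \ref{liu2} with the hypothesis $\theta\ll A_{*}^{\frac{N-1}{N}}$.

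Given a bounded sequence $\{u_n\}$ in $W^{1,A}(\Omega)$, I would first invoke Theorem \ref{liu1} to obtain a uniform trace bound $\|u_n\|_{L^{A_{*}^{\frac{N-1}{N}}}(\partial\Omega)}\le M_0$. Since $(A_2)\Rightarrow(A_1)$, Proposition \ref{embedd} furnishes $W^{1,A}(\Omega)\hookrightarrow W^{1,1}(\Omega)$, and the classical compact trace $W^{1,1}(\Omega)\hookrightarrow\hookrightarrow L^{1}(\partial\Omega)$ then yields, after passing to a subsequence, a limit $u$ with $u_n\to u$ in $L^1(\partial\Omega)$ and pointwise a.e.\ on $\partial\Omega$; a Fatou argument places $u$ in $L^{A_{*}^{\frac{N-1}{N}}}(\partial\Omega)$ with norm $\le M_0$.

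To finish, I would show $\|u_n-u\|_{L^{\theta}(\partial\Omega)}\to 0$ by estimating, for arbitrary $\lambda>0$, the modular
\[
I_n(\lambda)=\int_{\partial\Omega}\theta\!\left(x,\frac{|u_n-u|}{\lambda}\right)d\sigma.
\]
Fix $\mu=2M_0$, so that $\|u_n-u\|_{L^{A_{*}^{\frac{N-1}{N}}}(\partial\Omega)}\le\mu$ and hence $\int_{\partial\Omega}A_{*}^{\frac{N-1}{N}}(x,|u_n-u|/\mu)\,d\sigma\le 1$ uniformly in $n$. For any $\varepsilon>0$, the relation $\theta\ll A_{*}^{\frac{N-1}{N}}$ applied with $k=\mu/\lambda$ produces $T>0$ such that
\[
\theta\!\left(x,\tfrac{s}{\lambda}\right)\le\varepsilon\,A_{*}^{\frac{N-1}{N}}\!\left(x,\tfrac{s}{\mu}\right)\quad\text{for all }s\ge T,\ x\in\partial\Omega.
\]
Splitting $I_n(\lambda)$ into the integrals over $\{|u_n-u|\ge T\}$ and $\{|u_n-u|<T\}$, on the high set the pointwise inequality above gives a contribution bounded by $\varepsilon\int_{\partial\Omega}A_{*}^{\frac{N-1}{N}}(x,|u_n-u|/\mu)\,d\sigma\le\varepsilon$, while on the low set the integrand is dominated by the constant $\sup_{x\in\partial\Omega}\theta(x,T/\lambda)<\infty$ (continuity of $\theta$ on the compact set $\partial\Omega\times[0,T/\lambda]$) and converges pointwise a.e.\ to $\theta(x,0)=0$, so dominated convergence drives this portion to zero. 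Hence $\limsup_n I_n(\lambda)\le\varepsilon$; since $\varepsilon$ was arbitrary, $I_n(\lambda)\to 0$ for every $\lambda>0$, and this in turn forces $\|u_n-u\|_{L^{\theta}(\partial\Omega)}\to 0$.

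The main obstacle is the absence of any $\Delta_2$-type control on $\theta$ or on $A_{*}^{\frac{N-1}{N}}$: such control is precisely what allows Theorem \ref{liu2} to pass freely between Luxemburg norms and modular integrals. The trick I would rely on is to keep the two dilation parameters $\lambda$ (in $\theta$) and $\mu$ (in $A_{*}^{\frac{N-1}{N}}$) independent. The parameter $\mu$ is dictated by the trace bound of Theorem \ref{liu1} and used only to normalise the $A_{*}^{\frac{N-1}{N}}$-modular of $|u_n-u|$ to $\le 1$; the parameter $\lambda$ stays free, and the uniform-in-$x$ smallness encoded in $\theta\ll A_{*}^{\frac{N-1}{N}}$, invoked at the correct scale $k=\mu/\lambda$, transfers the unit bound from the $A_{*}^{\frac{N-1}{N}}$-modular to the $\theta$-modular with the prefactor $\varepsilon$ required for the limit argument.
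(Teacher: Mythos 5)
Your proof is correct and follows essentially the same route as the paper's: Proposition \ref{embedd} plus the classical compact trace $W^{1,1}(\Omega)\hookrightarrow\hookrightarrow L^{1}(\partial\Omega)$ to extract a subsequence converging a.e.\ (in measure) on $\partial\Omega$, Theorem \ref{liu1} for uniform boundedness of the traces in $L^{A_{*}^{\frac{N-1}{N}}}(\partial\Omega)$, and the relation $\theta\ll A_{*}^{\frac{N-1}{N}}$ to upgrade to convergence in $L^{\theta}(\partial\Omega)$. The only difference is that the paper delegates the last step to Lemma \ref{cedu} as a black box, whereas you prove it directly via the standard high/low-set splitting of the modular; this self-contained estimate has the incidental merit of applying verbatim to the surface measure on $\partial\Omega$, where the cited lemma is formally stated only for domains.
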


Before we give a proof of Theorem \ref{New_Bdtr}, we need a lemma in \cite{1Fan:12} or \cite{Adams:03}.
\begin{lem}\label{cedu}(See Lemma 3.9 in \cite{1Fan:12}.)
Let $A,B\in N(\Omega)$. Suppose that $B:\overline{\Omega}\times [0,+\infty)\to[0,+\infty)$ is continuous, $B\ll A$ and for $x\in\overline{\Omega}$, $t\in(0,+\infty)$, $B(x,t)>0$. If a sequence $\{u_{n}\}$ is bounded in $L^{A}(\Omega)$, and convergent in measure on $\Omega$, then it is convergent in norm in $L^{B}(\Omega)$.
\end{lem}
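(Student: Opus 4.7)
The strategy is to reduce Luxemburg-norm convergence in $L^B(\Omega)$ to modular convergence $\int_\Omega B(x,\lambda|u_n-u|(x))\,\mathrm{d}x\to 0$ for every $\lambda>0$ (a characterization that does \emph{not} require a $\Delta_2$ assumption on $B$), and to establish the latter via a tail-versus-bulk decomposition driven by $B\ll A$. As a first step, since $\{u_n\}$ converges in measure, there is a measurable function $u$ with $u_n\to u$ in measure; passing to an a.e.-convergent subsequence and using continuity of $A(x,\cdot)$, Fatou's lemma applied to $A(x,|u_{n_k}|/M)$ (where $M\ge 1$ bounds $\|u_n\|_A$) yields $u\in L^A(\Omega)$ with $\|u\|_A\le M$. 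Convexity of $A(x,\cdot)$ together with $A(x,0)=0$ then supplies a constant $C=2M$ with $\int_\Omega A(x,|u_n-u|/C)\,\mathrm{d}x\le 1$ uniformly in $n$.

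Fix $\lambda,\epsilon>0$ and write $v_n:=u_n-u$. Applying the $B\ll A$ hypothesis with parameter $k=\lambda C$ produces a threshold $T=T(\lambda,\epsilon)>0$ such that $B(x,\lambda s)\le(\epsilon/2)A(x,s/C)$ for all $x\in\overline\Omega$ and all $s\ge T$. On the tail set this gives
\begin{equation*}
\int_{\{|v_n|\ge T\}}B(x,\lambda|v_n|)\,\mathrm{d}x\;\le\;\frac{\epsilon}{2}\int_\Omega A(x,|v_n|/C)\,\mathrm{d}x\;\le\;\frac{\epsilon}{2}.
\end{equation*}
For the bulk region I would further split $\{|v_n|<T\}$ at a small level $\delta>0$: on $\{|v_n|<\delta\}$ the continuity of $B$ on the compact set $\overline\Omega\times[0,\lambda T]$ combined with $B(x,0)=0$ makes $\sup_{x\in\overline\Omega}B(x,\lambda\delta)\to 0$ as $\delta\to 0$, so one can pick $\delta$ to force this contribution below $\epsilon/4$; on $\{\delta\le|v_n|<T\}$ the integrand is bounded by $K:=\sup_{\overline\Omega\times[0,\lambda T]}B<+\infty$, and convergence in measure yields $|\{|v_n|\ge\delta\}|\to 0$, so this piece too is below $\epsilon/4$ for $n$ large. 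Summing the three pieces gives $\int_\Omega B(x,\lambda|v_n|)\,\mathrm{d}x<\epsilon$ eventually, and since $\lambda,\epsilon$ are arbitrary, $\|u_n-u\|_B\to 0$.

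The delicate point I anticipate is the precise use of $B\ll A$: the definition supplies only uniformity in $x\in\Omega$ for each fixed $k$, so one must apply it with $k=\lambda C$ (where $C$ came from the $L^A$-bound and the convexity estimate) and then reparametrize to obtain the pointwise bound $B(x,\lambda s)\le(\epsilon/2)A(x,s/C)$ that makes the tail estimate close. A secondary subtlety is that, lacking $\Delta_2(B)$, modular convergence must be proved for every scaling $\lambda>0$; letting $T$ depend on $\lambda$ handles this cleanly without altering the argument's structure. The a.e.-convergent subsequence is used only once, to identify $u$ and certify $u\in L^A(\Omega)$; both the tail and bulk estimates then apply to the full sequence $\{u_n\}$, so no subsequence-upgrading step is needed at the end.
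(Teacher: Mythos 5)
The paper itself gives no proof of this lemma: it is quoted verbatim from Fan's work (Lemma 3.9 in the cited reference \cite{1Fan:12}, going back to the classical Orlicz-space argument in Adams--Fournier), so there is nothing in this paper to compare against line by line. Your proof is correct and is essentially that standard argument: reduce Luxemburg-norm convergence to modular convergence at every scale $\lambda$, identify the limit and get the uniform modular bound $\int_\Omega A(x,|u_n-u|/2M)\,\mathrm{d}x\le 1$ via Fatou and convexity, then split into the tail (handled by $B\ll A$ with $k=\lambda C$, where the uniformity in $x$ is exactly what makes the threshold $T$ independent of $x$) and the bulk (handled by continuity of $B$ on the compact set $\overline\Omega\times[0,\lambda T]$ together with convergence in measure). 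The only implicit ingredients worth flagging are $|\Omega|<+\infty$ and compactness of $\overline\Omega$, which your bulk estimate uses and which are guaranteed by the standing assumption (P1) that $\Omega$ is bounded.
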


\noindent\textbf{The proof of Theorem \ref{New_Bdtr}.}
By Propositon \ref{embedd}, we know $W^{1,A}(\Omega)\hookrightarrow W^{1,1}(\Omega)$. And by the boundary compact embedding in Lebesgue-Sobolev space, we have $W^{1,A}(\Omega)\hookrightarrow W^{1,1}(\Omega)\hookrightarrow\hookrightarrow L^{1}(\partial \Omega)$. Then for any bounded sequence in $W^{1,A}(\Omega)$, there exists a subsequence such that it is convergent in $L^{1}(\partial \Omega)$. Then by the Lebesgue theorem, the subsequence is convergent in measure on $\partial \Omega$. At the same time by Theorem \ref{liu1} the subsequence we have found is bounded in $L^{{A_{*}}^{\frac{N-1}{N}}}(\partial\Omega)$. Then by Lemma \ref{cedu}, this subsequence is convergent in $L^{\theta}(\partial \Omega)$.

\begin{rem}
It is clear that $\theta(x,t)\leq A_{*}^{\frac{N-1}{N}-\epsilon}(x,t)$ for any $t>1$ and $x\in\partial\Omega$ implies $\theta\ll A_{*}^{\frac{N-1}{N}}$. Then it is clear that Theorem \ref{New_Bdtr} is stronger than Theorem \ref{liu2}.
\end{rem}

Next we claim that not only variable exponent Sobolev spaces satisfy the conditions in Theorem \ref{New_Bdtr} (see the following \textbf{Example \ref{ex1}}), but also some more complex case also satisfies conditions in Theorem \ref{New_Bdtr} (see the following \textbf{Example \ref{ex2}}). And the current existing theories in mathematical literature can not cover the case in \textbf{Example \ref{ex2}}.

\begin{ex} \label{ex1}
Let $p\in C^{1-0}(\overline\Omega)$ and $1< q\leq p(x)\leq
p_+:=\text{sup}_{x\in{\overline\Omega}}p(x)<n$ ($q\in \mathbb{R}$)
for $x\in\overline\Omega$. Define
$A:\overline\Omega\times\mathbb{R}\rightarrow[0,+\infty)$ by
\begin{equation*}
A(x,t)=|t|^{p(x)}.
\end{equation*}
Then it is readily checked that $A$ satisfies $(A_2)$, $(P1)$, $(P2)$ and $(P3)$. It is easy to see that $p\in C^{1-0}(\overline\Omega)$ means
$A\in C^{1-0}(\overline\Omega)$ and for $s>0$
\begin{equation}\label{ExAStar}
A_*^{-1}(x,s)=\frac{np(x)}{n-p(x)}s^{\frac{n-p(x)}{np(x)}}.
\end{equation}
Then $T(x)=+\infty$, which implies that $(P^1_{\partial})$ is satisfied ($\Rightarrow (P4)$ is satisfied).

In additional for $x\in\Omega$
\begin{equation*}
\nabla_x A(x,t)=|t|^{p(x)}(\ln |t|)\nabla p(x).
\end{equation*}
Since for any $\epsilon>0$, $\frac{\ln t}{t^\epsilon}\rightarrow0$ as $t\rightarrow+\infty$, we conclude that there
exist constants $\delta_1<\frac{1}{n}$, $c_1$ and $t_1$ such that
\begin{equation*}
\bigg|\frac{\partial A(x,t)}{\partial x_j}\bigg|\leq c_1A^{1+\delta_1}(x,t)
\end{equation*}
for all $x\in\Omega$ and $t\geq t_1$. Combining by $A\in\Delta_2(\Omega)$, from Proposition 3.1 in \cite{1Fan:12}
it is easy to see that Condition $(P5)$ is satisfied. By now conditions in Theorem \ref{New_Bdtr} are
verified.

By \eqref{ExAStar}, we can see that
\begin{equation*}
A_*(x,t)=\bigg(\frac{n-p(x)}{np(x)}|t|\bigg)^{\frac{np(x)}{n-p(x)}}.
\end{equation*}
Then $A_*^{\frac{n-1}{n}}\in\Delta_2(\Omega)$ and since $p(x)\leq p_+<n$ we can see
there exists a $\epsilon>0$ such that $A_*^{\frac{n-1}{n}-\epsilon}\in N(\overline{\Omega_{\delta}})$.
Then all conditions in Theorem \ref{New_Bdtr} can be satisfied.
\end{ex}

\begin{ex}\label{ex2}
Let $p\in
C^{1-0}(\overline\Omega)$ satisfy $1< p^-\leq p(x)\leq
p_+:=\text{sup}_{x\in{\overline\Omega}}p(x)<n-1$. Define
$A:\overline\Omega\times\mathbb{R}\rightarrow[0,+\infty)$ by
\begin{equation*}
A(x,t)=|t|^{p(x)}\log(1+|t|),
\end{equation*}

It is obvious that $A$ satisfies $(A_2)$, $(P1)$, $(P2)$ and $(P3)$. Pick
$\epsilon>0$ small enough such that $p^++\epsilon<n$. Then for $t>0$
big enough, $A(x,t)\leq c t^{p^++\epsilon}$, which implies that
$T(x)=+\infty$ for all $x\in\overline\Omega$. Thus
$(P^{1}_{\partial})$ is satisfied ($\Rightarrow (P_4)$ is
satisfied). Since $p\in C^{1-0}(\overline\Omega)$ and $A\in
C^{1-0}(\overline\Omega\times\mathbb{R})$, by Proposition 3.1 in
\cite{1Fan:12}, $A_*\in C^{1-0}(\overline\Omega\times\mathbb{R})$.
Combining $A\in\Delta_2(\Omega)$, it is easy to see that condition
$(P5)$ is satisfied. Then conditions in Theorem \ref{New_Bdtr}
are verified.
\end{ex}

\section{Variational structures in Musielak-Sobolev spaces}\label{Sec3}
Firstly, we give some assumptions of the differential operators in Musielak-Sobolev spaces. 

We say that $A:\overline{\Omega}\times[0,+\infty)\to[0,+\infty)$ satisfies Condition ($\mathscr{A}$), if both $A$ and $\widetilde{A}\in N(\Omega)$ are locally integrable and satisfy $\Delta_{2}(\Omega)$, $A$ satisfies $(A_2)$, (P1)-(P5), $(P^{1}_{\partial})$ in Section \ref{Sec2} and $(A_{\infty})$ below. 
\begin{enumerate}
\item[($A_{\infty}$)] There exists a continous function $A_{\infty}:[0,+\infty)\to[0,+\infty)$, such that $A_{\infty}(\alpha)\to+\infty$ as
$\alpha\to+\infty$ and
\begin{equation*}
A(x,\alpha t)\geq A_{\infty}(\alpha)\alpha A(x,t)
\end{equation*}
for any $x\in\Omega$, $t\in[0,+\infty)$ and $\alpha>0$.
\end{enumerate}

We always assume that Condition ($\mathscr{A}$) holds in this paper. And by Proposition \ref{p2.6}, the Musielak-Sobolev space $W^{1,A}(\Omega)$ is a separable and reflexive Banach space under our assumptions in ($\mathscr{A}$).

We say that a function $a_{1}:\Omega\times \mathbb{R}\to \mathbb{R}$ satisfies ($A1$), if the following conditions ($A_{11})-(A_{14}$) are satisfied:
\begin{enumerate}
\item[($A_{11}$)] $a_{1}(x,|t|):$ $\Omega\times \mathbb{R}\to \mathbb{R}$ is a \textit{Carath\'{e}odory} function;
\item[($A_{12}$)] There exists a positive constant $b_{1}$ such that
$|a_{1}(x,|t|)|t^{2}\leq b_{1}A(x,t)$ for $x\in\Omega$ and $t\in\mathbb{R}$;
\item[($A_{13}$)] There exists a positive constant $b_{2}$ such that
$a_{1}(x,|t|)t\geq b_{2}a(x,t)$ for $x\in\Omega$ and $t\in\mathbb{R}$;
\item[($A_{14}$)] $(a_{1}(x,|\xi|)\xi-a_{1}(x,|\eta|)\eta)(\xi-\eta)>0$ for $x\in\Omega$ and $\xi,\eta \in \mathbb{R}^{N}$.
\end{enumerate}

We say that a function $a_{0}:\Omega\times \mathbb{R}\to \mathbb{R}$ satisfies $(A0)$, if the following conditions $(A_{01})$-$(A_{04})$ are satisfied:
\begin{enumerate}
\item[($A_{01}$)] $a_{0}(x,t): \Omega\times \mathbb{R}\to \mathbb{R}$ is a \textit{Carath\'{e}odory} function;
\item[($A_{02}$)] There exists a positive constant $b_{1}$ such that
 $|a_{0}(x,t)|t\leq b_{1}A(x,t)$ for $x\in\Omega$ and $t\in\mathbb{R}$;
\item[($A_{03}$)] There exists a positive constant $b_{2}$ such that $a_{0}(x,t)\geq b_{2}a(x,t)$ for $x\in\Omega$ and $t\in\mathbb{R}$;
\item[($A_{04}$)] $(a_{0}(x,t_{1})-a_{0}(x,t_{2}))(t_{1}-t_{2})> 0$ for $x\in\Omega$ and $t_{1},t_{2}\in \mathbb{R}$.
\end{enumerate}

Assume $X$ is a real reflexive Banach space. We call the map $T:X\to X^{*}$ is of type $S_{+}$, if for any weakly convergent sequence $u_{n}\rightharpoonup u_{0}$ in $X$ satisfying $\overline{\lim\limits_{n\to\infty}}\langle T(u_{n}),u_{n}-u_{0}\rangle\leq 0$, we have $u_{n}\rightarrow u_{0}$ in $X$.

Define $\mathfrak{A}': W^{1,A}(\Omega)\to (W^{1,A}(\Omega))^{*}$ by
\begin{equation*}
\langle\mathfrak{A}'(u),v\rangle=\int\limits_{\Omega}a_{1}(x,|\nabla u|)\nabla u \nabla v+a_{0}(x,u)v\,\mathrm{d}x.
\end{equation*}
\begin{rem} \label{p1}
By Theorem 2.1 and Theorem 2.2 in \cite{Fan:12}, $\mathfrak{A}'$ is a bounded, continuous, coercive, strictly monotone homeomorphic and $S_{+}$ type operator.
\end{rem}



\section{Existence results of weak solutions}\label{Sec4}
In this section, we consider the existence of solutions to the following Neumann boundary value problem
\begin{equation}\label{1.1}
\begin{cases}
  -\text{div}(a_{1}(x,|\nabla u|)\nabla u)+a_{0}(x,u)=0,&\text{in }\Omega,\\
  a_{1}(x,|\nabla u|)\frac{\partial u}{\partial n}=g(x,u)&\text{on }\partial\Omega,
  \end{cases}
\end{equation}
where $\Omega$ is a bounded smooth domain in $\mathbb{R}^{N}$, $a_{0},a_1:\Omega\times \mathbb{R}\to \mathbb{R}$ are functions satisfying $(A0)$ and $(A1)$, the $A(x,t)$ in $(A0)$ and $(A1)$ satisfies ($\mathscr{A}$), and $\frac{\partial u}{\partial n}$ is the outward unit normal derivative on $\partial \Omega$.

We say that $u$ is a weak solution for (\ref{1.1}) if
\begin{equation*}
\int_{\Omega}a_{1}(x,|\nabla u|)\nabla u\nabla \varphi+a_{0}(x,u)\varphi \,\mathrm{d}x=\int_{\partial\Omega}g(x,u)\varphi \,\mathrm{d}\sigma,\quad\forall \varphi\in W^{1,A}(\Omega).
\end{equation*}

\begin{thm}\label{th1}
If $g(x,u)=g(x)$ and $g\in L^{\widetilde{A_{*}^{\frac{N-1}{N}}}}(\partial \Omega)$, then \eqref{1.1} has a unique weak solution.
\end{thm}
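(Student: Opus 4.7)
The plan is to reduce the existence and uniqueness to the homeomorphism property of the operator $\mathfrak{A}'$ asserted in Remark~\ref{p1}. Write the weak formulation as the operator equation
\begin{equation*}
\mathfrak{A}'(u)=G\quad\text{in }(W^{1,A}(\Omega))^{*},
\end{equation*}
where $G:W^{1,A}(\Omega)\to\mathbb{R}$ is given by $\langle G,\varphi\rangle:=\int_{\partial\Omega}g(x)\varphi\,\mathrm{d}\sigma$. The key point is that, since $g$ is independent of $u$, the right-hand side is a fixed element of the dual space, so existence and uniqueness of $u$ amount to showing $G\in(W^{1,A}(\Omega))^{*}$ and then inverting $\mathfrak{A}'$.

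First I would verify that $G$ is a well-defined, continuous, linear functional on $W^{1,A}(\Omega)$. Linearity is immediate. For continuity, use the continuous trace embedding $W^{1,A}(\Omega)\hookrightarrow L^{A_{*}^{\frac{N-1}{N}}}(\partial\Omega)$ from Theorem~\ref{liu1}, together with the generalized H\"older inequality on the boundary (the version of Proposition~\ref{p2.3}(5) applied on $\partial\Omega$ with the Musielak pair $A_{*}^{\frac{N-1}{N}}$ and its Young complement $\widetilde{A_{*}^{\frac{N-1}{N}}}$). Since $g\in L^{\widetilde{A_{*}^{\frac{N-1}{N}}}}(\partial\Omega)$, this yields
\begin{equation*}
|\langle G,\varphi\rangle|\leq 2\|g\|_{\widetilde{A_{*}^{\frac{N-1}{N}}}(\partial\Omega)}\,\|\varphi\|_{L^{A_{*}^{\frac{N-1}{N}}}(\partial\Omega)}\leq C\|\varphi\|_{W^{1,A}(\Omega)},
\end{equation*}
so $G\in (W^{1,A}(\Omega))^{*}$.

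Next, I would invoke Remark~\ref{p1}: under Condition~($\mathscr{A}$) together with $(A0)$ and $(A1)$, the operator $\mathfrak{A}':W^{1,A}(\Omega)\to (W^{1,A}(\Omega))^{*}$ is a strictly monotone, coercive, continuous homeomorphism. In particular, $\mathfrak{A}'$ is bijective. Hence there exists a unique $u\in W^{1,A}(\Omega)$ with $\mathfrak{A}'(u)=G$, and unfolding this equality against arbitrary test functions $\varphi\in W^{1,A}(\Omega)$ recovers precisely the weak formulation of \eqref{1.1}. Uniqueness follows either from injectivity of $\mathfrak{A}'$ (the homeomorphism property) or, more concretely, from strict monotonicity: if $u_{1},u_{2}$ were two solutions, subtracting the weak formulations and pairing with $u_{1}-u_{2}$ gives $\langle \mathfrak{A}'(u_{1})-\mathfrak{A}'(u_{2}),u_{1}-u_{2}\rangle=0$, forcing $u_{1}=u_{2}$.

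The only nontrivial step is the continuity of $G$; the rest is a direct citation of the operator-theoretic properties recalled in Remark~\ref{p1}. The main obstacle, should one wish to give a self-contained argument, would be confirming that the duality pairing $\int_{\partial\Omega}g\varphi\,\mathrm{d}\sigma$ is controlled by the Luxemburg norms via H\"older, which requires knowing that $A_{*}^{\frac{N-1}{N}}$ and $\widetilde{A_{*}^{\frac{N-1}{N}}}$ behave well as Musielak $N$-functions on $\partial\Omega$; under Condition~($\mathscr{A}$) and $(P^{1}_{\partial})$ this is already built into the framework of Section~\ref{Sec2}.
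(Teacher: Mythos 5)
Your proposal is correct and is essentially the paper's own argument: the paper disposes of this theorem in one line by citing Remark~\ref{p1} (the homeomorphism/strict monotonicity of $\mathfrak{A}'$) and Proposition~\ref{p2.6} (the duality identification showing the boundary integral against $g$ defines an element of the dual, via the trace embedding of Theorem~\ref{liu1}), which is exactly the reduction $\mathfrak{A}'(u)=G$ that you carry out in detail. Your write-up simply makes explicit the continuity estimate for $G$ and the uniqueness-from-monotonicity step that the paper leaves implicit.
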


Theorem \ref{th1} follows from Remark \ref{p1} and Proposition \ref{p2.6} immediately.

Next, we assume the following conditions on $g$,
\begin{enumerate}
\item[$(G1)$] $\Psi\in N(\partial \Omega)\cap\Delta_{2}(\partial\Omega)$ with $\Psi\ll A_{*}^{\frac{N-1}{N}}$, $\psi(x,s):=\frac{\partial\Psi(x,s)}{\partial s}$ exists, and there exists a constant $K_{1}>0$ such that
\begin{equation*}
|g(x,t)|\leq K_{1}\psi(x,|t|)+h(x),\quad x\in\partial\Omega,t\in \mathbb{R},
\end{equation*}
where $0\leq h\in L^{\widetilde\Psi}(\partial \Omega)$.
\item[$(G2)$] There exist $\theta>0, C>0$, and $M>0$, such that for $s\geq M$,
\begin{equation*}
0<\theta G(x,s)\leq sg(x,s),\,\forall x\in\partial\Omega,
\end{equation*}
where $\theta>C_{a}:=\max\{\overline{\lim\limits_{s\to\infty}}\frac{
a_{1}(x,|s|)s^{2}}{A_{1}(x,s)},\overline{\lim\limits_{s\to\infty}}\frac{
a_{0}(x,s)s}{A_{0}(x,s)}\}$.
\end{enumerate}

\begin{rem}
In fact, by assumptions $(A_{02})$, $(A_{03})$, $(A_{12})$ and $(A_{13})$, we have $C_{a}\in[1,\frac{b_{1}}{b_{2}}]$. The Condition $(G2)$ is the \textit{generalized Ambrosetti-Rabinowitz condition} for the nonlinear elliptic operator.
\end{rem}

\begin{lem}\label{l2.1}
Let $(G1)$ hold and
set $\mathfrak{G}(u):=\int\limits_{\partial\Omega}G(x,u)\,\mathrm{d}\sigma$. Then $\mathfrak{G}$ is sequentially weakly continuous in $W^{1,A}(\Omega)$ and $\mathfrak{G}':W^{1,A}(\Omega)\to (W^{1,A}(\Omega))^{*}$ is a completely continuous operator.
\end{lem}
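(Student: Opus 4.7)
The plan is to reduce everything to strong convergence on the boundary supplied by Theorem \ref{New_Bdtr}. Since $\Psi\in N(\partial\Omega)$ is continuous, strictly positive, and satisfies $\Psi\ll A_{*}^{\frac{N-1}{N}}$, that theorem gives the compact trace embedding $W^{1,A}(\Omega)\hookrightarrow\hookrightarrow L^{\Psi}(\partial\Omega)$. Hence any weakly convergent sequence $u_n\rightharpoonup u$ in $W^{1,A}(\Omega)$ converges strongly to $u$ in $L^{\Psi}(\partial\Omega)$, and by the usual extraction one obtains a subsequence (still denoted $\{u_n\}$) along which $u_n\to u$ a.e.\ on $\partial\Omega$ with a common dominant $w\in L^{\Psi}(\partial\Omega)$. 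A standard Urysohn-type argument at the end will promote all subsequence statements to full-sequence statements.

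For sequential weak continuity of $\mathfrak{G}$, integrating the growth bound in $(G1)$ gives $|G(x,t)|\leq K_{1}\Psi(x,|t|)+h(x)|t|$, so along the subsequence $|G(x,u_n)|\leq K_{1}\Psi(x,w)+h(x)w$. The first summand lies in $L^{1}(\partial\Omega)$ because $w\in L^{\Psi}(\partial\Omega)$ (Proposition \ref{p2.3}(2) applied on $\partial\Omega$), and the second lies in $L^{1}(\partial\Omega)$ by the H\"older inequality in Proposition \ref{p2.3}(5) since $h\in L^{\widetilde{\Psi}}$ and $w\in L^{\Psi}$. The Carath\'eodory continuity of $g$ then yields $G(x,u_n)\to G(x,u)$ a.e., and Lebesgue dominated convergence delivers $\mathfrak{G}(u_n)\to\mathfrak{G}(u)$.

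For complete continuity of $\mathfrak{G}'$, the continuous trace embedding $W^{1,A}(\Omega)\hookrightarrow L^{\Psi}(\partial\Omega)$ (from Theorem \ref{liu1} together with $\Psi\preccurlyeq A_{*}^{\frac{N-1}{N}}$) combined with H\"older yields
\begin{equation*}
\|\mathfrak{G}'(u_n)-\mathfrak{G}'(u)\|_{(W^{1,A}(\Omega))^{*}}\leq 2C\,\|g(\cdot,u_n)-g(\cdot,u)\|_{L^{\widetilde{\Psi}}(\partial\Omega)},
\end{equation*}
so the key step is to establish continuity of the Nemytskii operator $u\mapsto g(\cdot,u)$ from $L^{\Psi}(\partial\Omega)$ into $L^{\widetilde{\Psi}}(\partial\Omega)$. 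By $(G1)$ and Proposition \ref{p2.3}(6) applied with $A=\Psi$, one has $\psi(\cdot,w)\in L^{\widetilde{\Psi}}(\partial\Omega)$, and along the subsequence $g(\cdot,u_n)\to g(\cdot,u)$ a.e.\ with $L^{\widetilde{\Psi}}$-dominant $2K_{1}\psi(\cdot,w)+2h$. The main obstacle is that $\Delta_{2}$ is assumed only on $\Psi$ and not on $\widetilde{\Psi}$, so a.e.\ convergence with an $L^{\widetilde{\Psi}}$-dominant does not automatically upgrade to norm convergence. This gap is bridged by the Young equality $\widetilde{\Psi}(x,\psi(x,t))=t\psi(x,t)-\Psi(x,t)$ combined with the bound $t\psi(x,t)\leq\Psi(x,2t)\leq K\Psi(x,t)+h_{0}(x)$ coming from $\Psi\in\Delta_{2}$, which yields the pointwise control $\widetilde{\Psi}(x,\psi(x,t))\leq (K-1)\Psi(x,t)+h_{0}(x)$. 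Combined with the $L^{1}$-convergence $\Psi(\cdot,u_n)\to\Psi(\cdot,u)$ from Proposition \ref{p2.3}(4) and a Vitali-type argument on the modular $\int_{\partial\Omega}\widetilde{\Psi}(x,|g(x,u_n)-g(x,u)|/\lambda)\,\mathrm{d}\sigma$ for every $\lambda>0$, this produces the required $L^{\widetilde{\Psi}}$-norm convergence, and the Urysohn subsequence lift completes the proof.
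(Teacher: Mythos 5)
Your first claim (sequential weak continuity of $\mathfrak{G}$) is proved correctly, by a route mildly different from the paper's: you extract an a.e.-convergent subsequence with a common $L^{\Psi}(\partial\Omega)$-dominant and apply dominated convergence, whereas the paper estimates $|G(x,u_n)-G(x,u)|\le K_1|\Psi(x,|u_n|)-\Psi(x,|u|)|+h(x)|u_n-u|$ directly and invokes the modular continuity of Proposition \ref{p2.3}(4) together with H\"older, which needs no subsequence extraction at all. Both arguments rest on the same compact trace embedding $W^{1,A}(\Omega)\hookrightarrow\hookrightarrow L^{\Psi}(\partial\Omega)$ from Theorem \ref{New_Bdtr}, and your version is sound (the common dominant exists by the usual Riesz--Fischer extraction, and $\Psi(\cdot,w)\in L^{1}(\partial\Omega)$ follows from Proposition \ref{p2.3}(1) rather than (2)).

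The second claim is where there is a genuine gap. You correctly reduce complete continuity of $\mathfrak{G}'$ to norm convergence $g(\cdot,u_n)\to g(\cdot,u)$ in $L^{\widetilde\Psi}(\partial\Omega)$, and you correctly flag that $(G1)$ imposes $\Delta_2$ only on $\Psi$, not on $\widetilde\Psi$. But your proposed bridge does not close this. Norm convergence to $0$ requires $\int_{\partial\Omega}\widetilde\Psi(x,|g(x,u_n)-g(x,u)|/\lambda)\,\mathrm{d}\sigma\le 1$ eventually for \emph{every} $\lambda>0$, in particular for arbitrarily small $\lambda$. For small $\lambda$ the integrand is dominated, after a convexity split, by terms of the form $\widetilde\Psi(x,c\,\psi(x,w))$ and $\widetilde\Psi(x,c\,h)$ with $c=c(\lambda,K_1)$ large, and neither is integrable in general: $h\in L^{\widetilde\Psi}(\partial\Omega)$ only yields $\int\widetilde\Psi(x,h/\mu)\,\mathrm{d}\sigma<\infty$ for \emph{some} $\mu$, and your pointwise bound $\widetilde\Psi(x,\psi(x,t))\le (K-1)\Psi(x,t)+h_0(x)$ does not scale --- there is no analogous bound for $\widetilde\Psi(x,c\,\psi(x,t))$ with $c>1$ unless $\widetilde\Psi\in\Delta_2$ (equivalently, a $\nabla_2$-type condition on $\Psi$). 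What your estimate actually delivers is boundedness of $\{g(\cdot,u_n)\}$ in $L^{\widetilde\Psi}(\partial\Omega)$ and modular convergence at one fixed scale, hence only $\limsup_n\|g(\cdot,u_n)-g(\cdot,u)\|_{L^{\widetilde\Psi}(\partial\Omega)}\le\lambda_0$ for that scale, not convergence to $0$. In fairness, the paper's own proof of this half is a single sentence that silently assumes exactly the Nemytskii continuity you are trying to establish; but relative to the route you chose, the step from the pointwise control to the Vitali argument ``for every $\lambda>0$'' is where the argument breaks, and it cannot be repaired without either adding $\widetilde\Psi\in\Delta_2(\partial\Omega)$ to $(G1)$ or abandoning the $L^{\widetilde\Psi}$-norm estimate in favor of a direct uniform estimate of $\int_{\partial\Omega}(g(x,u_n)-g(x,u))v\,\mathrm{d}\sigma$ over $v$ in the unit ball of $L^{\Psi}(\partial\Omega)$.
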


\begin{proof}
Suppose $u_{n}\rightharpoonup u$ weakly in $W^{1,A}(\Omega)$, to prove $\mathfrak{G}$ is sequentially weakly continuous, it is sufficient to prove that
\begin{equation*}
\lim\limits_{n\to\infty}\int\limits_{\partial \Omega}G(x,u_{n})-G(x,u)\,\mathrm{d}\sigma=0.
\end{equation*}
In fact,
\begin{align*}
\bigg|\int\limits_{\partial \Omega}G(x,u_{n})-G(x,u)\,\mathrm{d}\sigma\bigg|&\leq\int\limits_{\partial \Omega}|G(x,u_{n})-G(x,u)|\,\mathrm{d}\sigma\\
&\leq K_{1}\int\limits_{\partial \Omega}|\Psi(x,|u_{n}|)-\Psi(x,|u|)|\,\mathrm{d}\sigma\\
&\quad\quad+2\|u_{n}-u\|_{L^{\Psi}(\partial\Omega)}
\|h\|_{L^{\widetilde\Psi}(\partial\Omega)}.
\end{align*}
From assumption $(G1)$, the compact embedding $W^{1,A}(\Omega)\hookrightarrow L^{\Psi}(\partial \Omega)$ and Proposition 2.3 in \cite{Liu:15}, we know that the right hand side of the above inequality tends to $0$.

Since $\langle\mathfrak{G}'(u),v\rangle=\int\limits_{\partial \Omega}g(x,u)v\,\mathrm{d}\sigma$, by the compact embedding $W^{1,A}(\Omega)\hookrightarrow L^{\Psi}(\partial \Omega)$, $\mathfrak{G}'$ is a completely continuous operator.
\end{proof}

Under assumption $(A_{\infty})$, the following assumptions on $A$ and $\Psi$ will be used.
\begin{enumerate}
\item[($\Psi_{\infty}$)] $\Psi(x,\alpha t)\leq\Psi_{\infty}(\alpha)\alpha \Psi(x,t)$, in which $\Psi_{\infty}: \mathbb{R}^{+}\to\mathbb{R}^{+}$ is a nondecreasing function
and satisfies
\begin{equation*}
\Psi_{\infty}(\alpha)\to\infty,\text{ as }\alpha\to \infty
\end{equation*}
and 
\begin{equation}\label{Psi-infty}
\lim\limits_{t\to\infty}\frac{\Psi_{\infty}(Ct)}{A_{\infty}(t)}=0,\,\forall\, C>0;
\end{equation}
\item[($\Psi_{0}$)] $\Psi(x,\alpha t)\leq\Psi_{0}(\alpha)\alpha \Psi(x,t)$, in which $\Psi_{0}: \mathbb{R}^{+}\to\mathbb{R}^{+}$ is a nondecreasing function
and satisfies
\begin{equation*}
\frac{A_{\infty}(\alpha)}{\Psi_{0}(C\alpha)}\to \infty,\text{ as }\alpha\to0,\,\forall C>0;
\end{equation*}
\item[($A_{0}$)] There exists a function $A_{0}:(0,+\infty)\to (0,+\infty)$ such that $A(x,\alpha t)\leq A_{0}(\alpha)\alpha A(x,t)$;
\item[($G3$)] $g(x,t)=-g(x,-t)$, for $x\in\partial\Omega, t\in \mathbb{R}$;
\item[($G4$)] There is a function $G_{0}:(0,+\infty)\to(0,+\infty)$ satisfying
\begin{equation*}
G(x,\alpha t)\geq G_{0}(\alpha)\alpha G(x,t)\text{ and }
\frac{G_{0}(\alpha)}{A_{0}(\alpha)}\to \infty,\text{ as }\alpha\to0
\end{equation*}
for $x\in\partial\Omega, t\in \mathbb{R}, \alpha>0$.
\end{enumerate}

\begin{thm}\label{th2}
If $g(x,u)$ satisfies Condition $(G1)$ and $(\Psi_{\infty})$, then (\ref{1.1}) has a weak solution.
\end{thm}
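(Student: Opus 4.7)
The plan is to apply the direct method of the calculus of variations to the energy functional
\begin{equation*}
J(u) := \mathfrak{A}(u) - \mathfrak{G}(u), \qquad u \in W^{1,A}(\Omega),
\end{equation*}
where $\mathfrak{A}(u) := \int_\Omega A_1(x, |\nabla u|) + A_0(x,u)\,\mathrm{d}x$ is the primitive of $\mathfrak{A}'$ from Section \ref{Sec3} (with $A_1, A_0$ antiderivatives determined by $a_1, a_0$) and $\mathfrak{G}(u) := \int_{\partial\Omega} G(x,u)\,\mathrm{d}\sigma$. Weak solutions of \eqref{1.1} are exactly the critical points of $J$, so it suffices to produce a minimizer.

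First I would establish weak lower semicontinuity of $J$. Because $\mathfrak{A}'$ is of type $(S_+)$, bounded, continuous, and strictly monotone (Remark \ref{p1}), the functional $\mathfrak{A}$ is convex and continuous on $W^{1,A}(\Omega)$, hence weakly lower semicontinuous. By Lemma \ref{l2.1}, $\mathfrak{G}$ is sequentially weakly continuous under $(G1)$ (this is precisely where the compact trace embedding $W^{1,A}(\Omega)\hookrightarrow\hookrightarrow L^{\Psi}(\partial\Omega)$ from Theorem \ref{New_Bdtr} enters, since $\Psi \ll A_*^{(N-1)/N}$). Therefore $J$ is weakly lower semicontinuous on the reflexive space $W^{1,A}(\Omega)$.

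The main obstacle is coercivity, which is where $(\Psi_\infty)$ and $(A_\infty)$ must cooperate. Using $(A_{12})$--$(A_{13})$ (and their $a_0$-analogues), $\mathfrak{A}(u)$ is bounded below by a positive constant times the modular $\int_\Omega A(x,|u|) + A(x,|\nabla u|)\,\mathrm{d}x$. For $\|u\|_\rho > 1$, writing $|u(x)| = \|u\|_\rho \cdot (|u(x)|/\|u\|_\rho)$ and applying $(A_\infty)$ pointwise, then integrating and using that the modular of $u/\|u\|_\rho$ equals $1$, I get
\begin{equation*}
\mathfrak{A}(u) \geq c\, A_\infty(\|u\|_\rho)\,\|u\|_\rho.
\end{equation*}
For the boundary term, $(G1)$ gives $|G(x,t)| \leq K_1 \Psi(x,|t|) + h(x)|t|$, and by the same scaling trick applied to $\Psi$ via $(\Psi_\infty)$ together with the continuous trace embedding $W^{1,A}(\Omega)\hookrightarrow L^{\Psi}(\partial\Omega)$ (Theorem \ref{New_Bdtr}, since $\Psi\ll A_*^{(N-1)/N}$), I obtain
\begin{equation*}
|\mathfrak{G}(u)| \leq K_1\,\Psi_\infty(C\|u\|_\rho)\,C\|u\|_\rho + 2\|h\|_{L^{\widetilde\Psi}(\partial\Omega)}\,C\|u\|_\rho
\end{equation*}
for some embedding constant $C$. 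Combining these,
\begin{equation*}
J(u) \geq \|u\|_\rho\bigl[c\,A_\infty(\|u\|_\rho) - K_1 C\,\Psi_\infty(C\|u\|_\rho) - 2C\|h\|_{L^{\widetilde\Psi}(\partial\Omega)}\bigr],
\end{equation*}
and the key hypothesis \eqref{Psi-infty}, namely $\Psi_\infty(Ct)/A_\infty(t) \to 0$, forces the bracket to tend to $+\infty$ as $\|u\|_\rho \to \infty$. Hence $J$ is coercive.

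With weak lower semicontinuity and coercivity in hand on the reflexive Banach space $W^{1,A}(\Omega)$ (reflexivity from Proposition \ref{p2.6} under Condition $(\mathscr{A})$), any minimizing sequence has a weakly convergent subsequence, and its weak limit minimizes $J$. This minimizer is a weak solution of \eqref{1.1}. The delicate step I expect to need the most care is the scaling estimate bounding $\int_{\partial\Omega}\Psi(x,|u|)\,\mathrm{d}\sigma$ in terms of $\Psi_\infty(C\|u\|_\rho)\,\|u\|_\rho$; the correct way to set this up is by normalizing $u$ by its $L^{\Psi}(\partial\Omega)$-norm, applying $(\Psi_\infty)$ pointwise, integrating, and then using the continuous trace embedding to replace the boundary norm by $\|u\|_\rho$ up to a constant.
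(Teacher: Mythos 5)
Your proposal is correct and follows essentially the same route as the paper: the same decomposition $J=\mathfrak{A}-\mathfrak{G}$, weak lower semicontinuity via Remark \ref{p1} and Lemma \ref{l2.1}, coercivity by the $(A_\infty)$/$(\Psi_\infty)$ scaling of the modulars combined with the trace embedding and \eqref{Psi-infty}, and then the direct method on the reflexive space $W^{1,A}(\Omega)$. The only cosmetic slip is citing $(A_{12})$ for the lower bound on $\mathfrak{A}$, where the paper correctly uses $(A_{13})$ and $(A_{03})$.
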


Define the functional $J(u)$ corresponding (\ref{1.1}) by
\begin{equation*}
J(u)=\int\limits_{\Omega}A_{1}(x,|\nabla u|)+A_{0}(x,u)\,\mathrm{d}x
-\int\limits_{\partial\Omega}G(x,u)\,\mathrm{d}\sigma,
\end{equation*}
in which $A_{1}(x,t)=\int_{0}^{t}a_{1}(x,s)s\,\mathrm{d}s$, $A_{0}(x,t)=\int_{0}^{t}a_{0}(x,s)\,\mathrm{d}s$, $G(x,t)=\int_{0}^{t}g(x,s)\,\mathrm{d}s$. Under the assumptions in Theorem \ref{th2}, $J(u)$ is well defined in $W^{1,A}(\Omega)$ and $J\in C^{1}$.
\begin{rem}
Under assumption $(G1)$, by Lemma \ref{l2.1} and Remark \ref{p1}, we know that $J'$ is of type $S_{+}$.
\end{rem}

\noindent\textbf{The proof of Theorem \ref{th2}.}
By $(A_{03})$, $(A_{13})$, $(G1)$ and $(\Psi_{\infty})$, we obtain
\begin{equation*}
\begin{aligned}
J(u)&=\int\limits_{\Omega}A_{1}(x,|\nabla u|)+A_{0}(x,u)\,\mathrm{d}x
-\int\limits_{\partial\Omega}G(x,u)\,\mathrm{d}\sigma\\
&\geq b_{2}\int\limits_{\Omega}A(x,|\nabla u|)+A(x,| u|)\,\mathrm{d}x-\int\limits_{\partial\Omega}\Psi(x,u)+|u(x)|h(x)\,\mathrm{d}\sigma\\
&\geq b_{2}A_{\infty}(\|u\|_{\rho})\|u\|_{\rho}\int\limits_{\Omega}A(x,\frac{|\nabla u|}{\|u\|_{\rho}})+A(x,\frac{|u|}{\|u\|_{\rho}})\,\mathrm{d}x\\
&\quad\quad-K_{1}\Psi_{\infty}(\|u\|_{\Psi})\|u\|_{\Psi}
\int\limits_{\partial\Omega}\Psi(x,\frac{|u|}{\|u\|_{\Psi}})\,\mathrm{d}\sigma-\|u\|_{\Psi}\|h\|_{\Psi}\\
&\geq b_{2}A_{\infty}(\|u\|_{\rho})\|u\|_{\rho}
-C_{1}\Psi_{\infty}(c\|u\|_{\rho})\|u\|_{\rho}-C\|u\|_{\rho}\|h\|_{\Psi}\\
&=(b_{2}A_{\infty}(\|u\|_{\rho})-C_{1}\Psi_{\infty}(c\|u\|_{\rho})
-C\|h\|_{\Psi})\|u\|_{\rho}\geq \frac{b_{2}}{2}A_{\infty}(\|u\|_{\rho})\|u\|_{\rho}\to\infty, 
\end{aligned}
\end{equation*}
as $\|u\|_{\rho}\to\infty$. The last inequality sign follows from \eqref{Psi-infty}.
By Lemma \ref{l2.1} and Remark \ref{p1}, we know that $J$ is weakly lower semicontinuous in $W^{1,A}(\Omega)$. Then $J$ admits a minimum in $W^{1,A}(\Omega)$ which is a weak solution for (\ref{1.1}). The proof is completed.

\begin{lem}\label{ps}
Under the assumptions $(G1)$ and $(G2)$, $J$ satisfies the $P.S$. condition.
\end{lem}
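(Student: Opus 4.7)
The plan is the standard two-step argument for Palais--Smale sequences: first extract boundedness in $W^{1,A}(\Omega)$ using the generalized Ambrosetti--Rabinowitz condition $(G2)$ together with $(A_\infty)$, and then upgrade weak convergence to strong convergence via the $S_+$ property of $\mathfrak A'$ and the compactness of $\mathfrak G'$.

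Let $\{u_n\}\subset W^{1,A}(\Omega)$ be a sequence such that $|J(u_n)|\le C$ and $J'(u_n)\to 0$ in $(W^{1,A}(\Omega))^*$. I would form the combination $\theta J(u_n)-\langle J'(u_n),u_n\rangle$, which is bounded above by $C(1+\|u_n\|)$. Expanding it gives
\begin{equation*}
\begin{aligned}
\theta J(u_n)-\langle J'(u_n),u_n\rangle
&=\int_\Omega\bigl[\theta A_1(x,|\nabla u_n|)-a_1(x,|\nabla u_n|)|\nabla u_n|^2\bigr]\,\mathrm d x\\
&\quad+\int_\Omega\bigl[\theta A_0(x,u_n)-a_0(x,u_n)u_n\bigr]\,\mathrm d x\\
&\quad-\int_{\partial\Omega}\bigl[\theta G(x,u_n)-g(x,u_n)u_n\bigr]\,\mathrm d\sigma.
\end{aligned}
\end{equation*}
Condition $(G2)$ makes the boundary integrand nonpositive once $|u_n|\ge M$, so the boundary piece contributes at most an $O(1)$ error (from the set $\{|u_n|<M\}$, using $(G1)$). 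Since $\theta>C_a$, choosing $\varepsilon>0$ small enough yields $\theta A_1(x,t)-a_1(x,|t|)t^2\ge c\,A_1(x,t)$ and $\theta A_0(x,t)-a_0(x,t)t\ge c\,A_0(x,t)$ for $|t|$ large, and these in turn dominate $c'[A(x,|\nabla u_n|)+A(x,|u_n|)]$ by $(A_{12})$ and $(A_{02})$, up to another $O(1)$ correction on the region where $|t|$ is small.

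Combining these bounds gives
\begin{equation*}
\int_\Omega A(x,|\nabla u_n|)+A(x,|u_n|)\,\mathrm d x\le C\bigl(1+\|u_n\|_\rho\bigr).
\end{equation*}
Assuming $\|u_n\|_\rho>1$ so that $\int_\Omega A(x,|\nabla u_n|/\|u_n\|_\rho)+A(x,|u_n|/\|u_n\|_\rho)\,\mathrm d x=1$, I would apply $(A_\infty)$ with $\alpha=\|u_n\|_\rho$ and $t=|u_n|/\|u_n\|_\rho$, $t=|\nabla u_n|/\|u_n\|_\rho$ to conclude
\begin{equation*}
A_\infty(\|u_n\|_\rho)\,\|u_n\|_\rho\le C(1+\|u_n\|_\rho).
\end{equation*}
Since $A_\infty(\alpha)\to+\infty$, this forces $\{\|u_n\|_\rho\}$ to be bounded, and hence $\{u_n\}$ is bounded in $W^{1,A}(\Omega)$. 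This boundedness step is the main technical obstacle, because one has to juggle the Luxemburg norm and the modular via $(A_\infty)$ to squeeze the estimate through.

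By reflexivity of $W^{1,A}(\Omega)$ (Proposition \ref{p2.6}) I pass to a subsequence with $u_n\rightharpoonup u$ in $W^{1,A}(\Omega)$. Writing $J'=\mathfrak A'-\mathfrak G'$, Lemma \ref{l2.1} gives $\mathfrak G'(u_n)\to \mathfrak G'(u)$ in $(W^{1,A}(\Omega))^*$, and $\langle J'(u_n),u_n-u\rangle\to 0$ because $J'(u_n)\to 0$ and $\{u_n-u\}$ is bounded. Therefore
\begin{equation*}
\langle\mathfrak A'(u_n),u_n-u\rangle=\langle J'(u_n),u_n-u\rangle+\langle\mathfrak G'(u_n),u_n-u\rangle\longrightarrow 0.
\end{equation*}
Since $\mathfrak A'$ is of type $S_+$ by Remark \ref{p1}, this yields $u_n\to u$ strongly in $W^{1,A}(\Omega)$, completing the verification of the Palais--Smale condition.
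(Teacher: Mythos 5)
Your proposal is correct and follows essentially the same route as the paper: the paper forms $J(u_n)-\beta\langle J'(u_n),u_n\rangle$ with $\beta\in(\tfrac1\theta,\tfrac1{C_a+\varepsilon})$, which after rescaling is the same combination $\theta J(u_n)-\langle J'(u_n),u_n\rangle$ you use, and it likewise derives boundedness from $(G2)$, the definition of $C_a$, and $(A_\infty)$, then concludes via the $S_+$ property. Your final step, splitting $J'=\mathfrak A'-\mathfrak G'$ and combining the $S_+$ property of $\mathfrak A'$ with the complete continuity of $\mathfrak G'$, is just the explicit justification of the paper's remark that $J'$ itself is of type $S_+$.
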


\begin{proof}
Let $\{u_{n}\}_{n\in\mathbb{N}}$ be a $P.S$. sequence of $J$, i. e., $J(u_{n})\to C$, $J'(u_{n})\to 0$. Fixing $\beta\in (\frac{1}{\theta},\frac{1}{C_{a}+\varepsilon})$ with $\varepsilon>0$ small enough, by the definition of $C_{a}$, there exists $C_{\varepsilon}>0$, such that
\begin{align*}
C+1+\|u_{n}\|\geq& J(u_{n})-\beta\langle J'(u_{n}),u_{n}\rangle \\
 =&\int\limits_{\Omega}A_{1}(x,|\nabla u_{n}|)+A_{0}(x,|u_{n}|)\,\mathrm{d}x-
 \beta\int\limits_{\Omega}a_{1}(x,|\nabla u_{n}|)|\nabla u_{n}|^{2}\\&\quad\quad+a_{0}(x,|u_{n}|)u_{n}\,\mathrm{d}x+\int\limits_{\partial\Omega}\beta g(x,u_{n})u_{n}-G(x,u_{n})\,\mathrm{d}\sigma\\
 \geq& (1-\beta (C_{a}+\varepsilon))\int\limits_{\Omega}A_{1}(x,|\nabla u_{n}|)+A_{0}(x,|u_{n}|)\,\mathrm{d}x+\\&\quad\quad
\int\limits_{\partial\Omega}\beta g(x,u_{n})u_{n}-G(x,u_{n})\,\mathrm{d}\sigma-C_{\varepsilon}\\
 \geq& (1-\beta (C_{a}+\varepsilon))b_{2}A_{\infty}(\|u_{n}\|)\|u_{n}\|-C_{\varepsilon}.
\end{align*}
From the above inequality, it is easy to see that $\{u_{n}\}$ is bounded in $W^{1,A}(\Omega)$. Then there exists a subsequence of  $\{u_{n}\}$ (still denoted by $\{u_{n}\}$) such that $u_{n}\rightharpoonup u_{0}$ in $W^{1,A}(\Omega)$. So we conclude that $\langle J'(u_{n})-J'(u_{0}),u_{n}-u_{0}\rangle\to 0$. Since $J'$ is a $S_{+}$ type operator, we  can conclude that $u_{n}\to u_{0}$.
\end{proof}

\begin{thm}\label{th3}
Under the assumptions ($G1$), ($G2$), ($\Psi_{0}$) and $h\equiv0$ in ($G1$), (\ref{1.1}) has a nontrivial solution.
\end{thm}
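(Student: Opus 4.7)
\textbf{Proof proposal for Theorem \ref{th3}.}
The plan is to apply the Mountain Pass Theorem to $J$ on $W^{1,A}(\Omega)$. By Lemma~\ref{ps}, $J$ satisfies the Palais--Smale condition; $J\in C^{1}$ was noted after the statement of Theorem~\ref{th2}; and $h\equiv 0$ in $(G1)$ forces $G(x,0)=0$, so $J(0)=0$. Hence it suffices to verify that $J$ has a strict lower bound on some sphere $\|u\|_{\rho}=r_{0}$ and that $J$ becomes negative at some point beyond that sphere.

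\emph{Local geometry near $0$.} Fix $u\in W^{1,A}(\Omega)$ with $r:=\|u\|_{\rho}$ small. Since $A,\widetilde A\in\Delta_{2}(\Omega)$, the definition of the $\rho$-norm gives $\int_{\Omega}[A(x,|u|/r)+A(x,|\nabla u|/r)]\,dx=1$. Applying $(A_{\infty})$ with $\alpha=r$ and then $(A_{03}),(A_{13})$,
\begin{equation*}
\int_{\Omega}A_{1}(x,|\nabla u|)+A_{0}(x,u)\,dx\geq b_{2}\,r\,A_{\infty}(r).
\end{equation*}
For the boundary term, since $h\equiv 0$ in $(G1)$ integration yields $|G(x,t)|\leq K_{1}\Psi(x,|t|)$. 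With $s:=\|u\|_{L^{\Psi}(\partial\Omega)}$, the Luxemburg normalization together with $(\Psi_{0})$ at $\alpha=s$ produces $\int_{\partial\Omega}\Psi(x,u)\,d\sigma\leq s\,\Psi_{0}(s)$. By Theorem~\ref{New_Bdtr}, since $\Psi\ll A_{*}^{(N-1)/N}$, the embedding $W^{1,A}(\Omega)\hookrightarrow L^{\Psi}(\partial\Omega)$ is continuous, so $s\leq Cr$; monotonicity of $\Psi_{0}$ then gives $\Psi_{0}(s)\leq\Psi_{0}(Cr)$. Combining,
\begin{equation*}
J(u)\geq r\bigl[\,b_{2}A_{\infty}(r)-K_{1}C\,\Psi_{0}(Cr)\,\bigr].
\end{equation*}
The quotient condition in $(\Psi_{0})$ is precisely $A_{\infty}(r)/\Psi_{0}(Cr)\to\infty$ as $r\to 0^{+}$, so the bracket is positive for $r$ small, furnishing $r_{0},\alpha>0$ with $\inf_{\|u\|_{\rho}=r_{0}}J(u)\geq\alpha$.

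\emph{Mountain beyond $r_{0}$.} Integrating $(G2)$ gives $G(x,t)\geq c_{1}|t|^{\theta}-c_{2}$ for all $t$ and $x\in\partial\Omega$; the definition of $C_{a}$ with $(A_{02}),(A_{12})$ yields $A_{1}(x,t)+A_{0}(x,t)\leq C_{3}|t|^{C_{a}+\varepsilon}+C_{4}$ for any preassigned $\varepsilon>0$. Taking $u_{0}\equiv 1$ and $\varepsilon<\theta-C_{a}$,
\begin{equation*}
J(tu_{0})\leq C_{5}t^{C_{a}+\varepsilon}-C_{6}t^{\theta}+C_{7}\to-\infty\quad\text{as }t\to+\infty,
\end{equation*}
so $e:=Tu_{0}$ for $T$ large satisfies $\|e\|_{\rho}>r_{0}$ and $J(e)<0$. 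The Mountain Pass Theorem then produces a critical point $u^{*}$ with $J(u^{*})\geq\alpha>0$, hence nontrivial, which is a weak solution of \eqref{1.1}.

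\emph{Main obstacle.} The delicate step is the near-zero geometry: hypothesis $(\Psi_{0})$ is calibrated exactly so that, after transferring the boundary Luxemburg normalization to the Sobolev $\rho$-norm via Theorem~\ref{New_Bdtr} and invoking $(A_{\infty})$ on the modular over $\Omega$, the bracket $b_{2}A_{\infty}(r)-K_{1}C\Psi_{0}(Cr)$ appears, whose positivity for small $r$ is exactly the content of the quotient condition. Any mismatch between the comparison function on $\Omega$ and the one on $\partial\Omega$ would break the estimate, which is why the trace embedding with controlled constant $C$ is indispensable.
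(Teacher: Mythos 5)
Your proposal is correct and follows essentially the same route as the paper: the same mountain-pass verification, with the near-zero estimate $J(u)\geq \|u\|\bigl[b_{2}A_{\infty}(\|u\|)-C_{1}\Psi_{0}(C\|u\|)\bigr]$ obtained via $(A_{\infty})$, $(\Psi_{0})$ and the trace embedding, and the unboundedness below along a ray obtained from the Ambrosetti--Rabinowitz condition $(G2)$ versus the growth rate $C_{a}+\varepsilon<\theta$. The only cosmetic difference is that you fix the explicit test element $u_{0}\equiv 1$ where the paper uses a generic $w$ with nontrivial boundary trace.
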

\begin{proof}
It is sufficient to verify that $J$ satisfies conditions in Mountain Pass Lemma (see \cite{Willem}). 

By ($G1$) and ($\Psi_{0}$), we obtian
\begin{align*}
J(u)\geq& b_{2}A_{\infty}(\|u\|)\|u\|-K_{1}\int\limits_{\partial\Omega}
\Psi(x,\frac{|u|}{\|u\|_{\Psi}}\|u\|_{\Psi})\,\mathrm{d}\sigma\\
\geq& b_{2}A_{\infty}(\|u\|)\|u\|-K_{1}\Psi_{0}( \|u\|_{\Psi})\|u\|_{\Psi}\\
\geq&\big[ b_{2}A_{\infty}(\|u\|)-C_{1}\Psi_{0}( C\|u\|) \big]\|u\|.
\end{align*}
By the assumption $(\Psi_{0})$, there exists a small $\gamma>0$ such that $b_{2}A_{\infty}(\|u\|)-C_{1}\Psi_{0}( C\|u\|) >0$ for $0<\|u\|\leq \gamma$. Hence, for $\|u\|=\gamma$, there exists a $\delta>0$ such that $J(u)\geq \delta>0$.

Next we will show that there exists a $w\in W^{1,A}(\Omega)$ with $\|w\|>\rho$ such that $J(w)<0$. In fact, for any $w\in W^{1,A}(\Omega)\cap W^{1,C_{a}+\varepsilon}(\Omega)$ with $\varepsilon$ small enough such that $C_{a}+\varepsilon<\theta$, we can get
\begin{align*}
J(tw)=&\int\limits_{\Omega}A_{1}(x,|t\nabla w|)+A_{0}(x,tw)\,\mathrm{d}x
-\int\limits_{\partial\Omega}G(x,tw)\,\mathrm{d}\sigma\\
\leq& C_{1}t^{C_{a}+\varepsilon}\int\limits_{\Omega}|\nabla w|^{C_{a}+\varepsilon}+|w|^{C_{a}+\varepsilon}\,\mathrm{d}x
-C_{2}t^{\theta}\int\limits_{\partial\Omega}w^{\theta}\,\mathrm{d}\sigma+C_{\varepsilon}.
\end{align*}
Let $t\to \infty$, we have $J(tw)\to-\infty$, which completes the proof.
\end{proof}

Denote $X:=W^{1,A}(\Omega)$. Then under our assumptions in ($\mathscr{A}$), there exists $\{e_{j}\}_{j=1}^{\infty}\subset X$ and $\{e_{j}^{*}\}_{j=1}^{\infty}\subset X^{*}$ such that
\begin{align*}
 \langle e_{i}^{*},e_{j}\rangle=\begin{cases}&1,\quad i=j,\\
 &0,\quad i\neq j;
 \end{cases}
\end{align*}
and
\begin{center}
$X=\overline{\text{span}\{e_{j},j=1,2,\cdots\}}$, $X^{*}=\overline{\text{span}\{e^{*}_{j},j=1,2,\cdots\}}$.
\end{center}
For $k=1,2,\cdots$, denote
\begin{equation*}
X_{j}=\text{span}\{e_{j}\}, Y_{k}=\bigoplus\limits_{j=1}^{k}X_{j},
Z_{k}=\overline{\bigoplus\limits_{j=k}^{\infty}X_{j}}.
\end{equation*}

\begin{lem}\label{l4.4}
Let $\Psi \ll A_{*}^{\frac{N-1}{N}}$, and denote
\begin{equation*}
\beta_{k}=\sup\bigg\{\int\limits_{\partial\Omega}\Psi(x,|u|)+h(x)|u|\,\mathrm{d}\sigma:\|u\|\leq \vartheta,u\in Z_{k}\bigg\},
\end{equation*}
in which $\vartheta$ is any given positive number. Then $\lim\limits_{k\to\infty}\beta_{k}=0$.
\end{lem}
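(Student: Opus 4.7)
The plan is to combine the new compact trace embedding of Theorem \ref{New_Bdtr} with a standard biorthogonal-system argument. First, observe that since $Z_{k+1}\subseteq Z_k$, the sequence $\{\beta_k\}$ is nonincreasing and bounded below by $0$, hence it converges to some $\beta\geq 0$; the task is to show $\beta=0$.

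For each $k$, choose $u_k\in Z_k$ with $\|u_k\|\leq\vartheta$ and
$$\int_{\partial\Omega}\Psi(x,|u_k|)+h(x)|u_k|\,\mathrm{d}\sigma\geq \beta_k-\tfrac{1}{k}.$$
Since $W^{1,A}(\Omega)$ is reflexive under Condition ($\mathscr{A}$) and $\{u_k\}$ is bounded, a subsequence (still denoted $\{u_k\}$) converges weakly to some $u_0\in W^{1,A}(\Omega)$. To show $u_0=0$, fix $j\in\mathbb{N}$: for $k>j$ we have $u_k\in Z_k\subset Z_{j+1}$, so $\langle e_j^*,u_k\rangle=0$; passing to the weak limit gives $\langle e_j^*,u_0\rangle=0$ for every $j$, and since $X^*=\overline{\mathrm{span}\{e_j^*\}}$, necessarily $u_0=0$.

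Next, the hypothesis $\Psi\ll A_*^{\frac{N-1}{N}}$ together with ($\mathscr{A}$) (which supplies $(A_2)$, $(P1)$--$(P5)$ and $(P_\partial^1)$) lets me invoke Theorem \ref{New_Bdtr} to conclude that $W^{1,A}(\Omega)\hookrightarrow\hookrightarrow L^{\Psi}(\partial\Omega)$. Hence $u_k\to 0$ strongly in $L^{\Psi}(\partial\Omega)$. By Proposition \ref{p2.3}(4) (using $\Psi\in\Delta_{2}(\partial\Omega)$ from $(G1)$) this yields
$$\int_{\partial\Omega}\Psi(x,|u_k|)\,\mathrm{d}\sigma\to 0,$$
and by the generalized H\"older inequality in Proposition \ref{p2.3}(5) together with $h\in L^{\widetilde\Psi}(\partial\Omega)$,
$$\int_{\partial\Omega}h(x)|u_k|\,\mathrm{d}\sigma\leq 2\|h\|_{\widetilde\Psi}\|u_k\|_{\Psi}\to 0.$$
Combining the last two displays with the choice of $u_k$ gives $\beta_k\leq o(1)+\tfrac1k$, so $\beta=0$, as desired.

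The main subtlety is the identification of the weak limit $u_0$ as $0$ via the biorthogonal pair $\{e_j,e_j^*\}$; once this is established, the compact trace embedding of Theorem \ref{New_Bdtr} does all the substantive work and the modular/H\"older estimates of Proposition \ref{p2.3} are routine.
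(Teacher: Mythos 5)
Your proposal is correct and follows essentially the same route as the paper: pick near-maximizers $u_k\in Z_k$, extract a weakly convergent subsequence, identify the weak limit as $0$ via the biorthogonal functionals $e_j^*$, upgrade to strong convergence in $L^{\Psi}(\partial\Omega)$ by the compact trace embedding of Theorem \ref{New_Bdtr}, and finish with the modular convergence of Proposition \ref{p2.3} and the H\"older estimate for the $h$-term. Your choice of $u_k$ with integral at least $\beta_k-\tfrac1k$ is in fact slightly cleaner than the paper's normalization against $\beta$, but the argument is the same.
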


\begin{proof}
It is obvious that $0<\beta_{k+1}\leq\beta_{k}$. Then there exists a $\beta\geq 0$ such that $\beta_{k}\to \beta$ as $k\to\infty$. By the definition of $\beta_{k}$, there exist $u_{k}\in Z_{k}$ and $\|u_{k}\|\leq \vartheta$ such that $0\leq\int\limits_{\partial\Omega}\Psi(x,|u_{k}|)+h(x)|u_{k}|\,\mathrm{d}\sigma-\beta<\frac{1}{k}$. Then there exists a subsequence of $\{u_{k}\}$ (still denoted by $\{u_{k}\}$) satisfying $u_{k}\rightharpoonup u$ in $W^{1,A}(\Omega)$ and $u_{k}\to u$ in $L^{\Psi}(\partial\Omega)$. By the definition of $Z_{k}$ we obtain
\begin{equation*}
\langle e_{j}^{*},u\rangle=\lim\limits_{k\to\infty}\langle e_{j}^{*},u_{k}\rangle=0,\quad\forall j=1,2,\cdots.
\end{equation*}
Then $u=0$, which means $u_{k}\to0$ in $L^{\Psi}(\partial\Omega)$. $\beta_{k}\to 0$ follows by Proposition 2.3 in \cite{Liu:15} and the assumption $h\in L^{\widetilde\Psi}(\partial\Omega)$.
\end{proof}

\begin{thm}
Let $(G1)$, $(G3)$, $(G4)$ and $(\Psi_{\infty})$ hold. Then \eqref{1.1} has a sequence of solutions $\{u_{n}\}_{n\in \mathbb{N}}$ with negative energy and  $I(u_{n})\to 0$.
\end{thm}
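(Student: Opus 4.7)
The natural tool is the Dual Fountain Theorem of Bartsch--Willem (Willem, \emph{Minimax Theorems}, Theorem 3.18), which is tailored to produce a sequence of negative critical values accumulating at $0$. The Schauder decomposition $X=W^{1,A}(\Omega)=Y_k\oplus Z_k$ introduced before Lemma \ref{l4.4} supplies the abstract frame. The functional $J$ is even: $(G3)$ makes $G(x,\cdot)$ and hence $\mathfrak G$ even, while $A_1(x,|\nabla u|)$ and $A_0(x,u)$ are even in their arguments. The $(PS)^*_c$ condition for $c<0$ follows by repeating the coercivity computation from Theorem \ref{th2} (now applicable since $(\Psi_\infty)$ is assumed) to bound any $(PS)^*$-sequence, extracting a weak limit in $X$, and then invoking the $S_+$ property of $J'$ (Remark \ref{p1}) together with the complete continuity of $\mathfrak G'$ (Lemma \ref{l2.1}) to upgrade weak to strong convergence; the finite-dimensional restriction inherent in $(PS)^*$ does not affect the argument.

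For condition (B1) I fix $\rho_k\equiv\vartheta>0$ independent of $k$. For $u\in Z_k$ with $\|u\|=\vartheta$, assumption $(G1)$, Proposition \ref{p2.3} and the equivalence of $\|\cdot\|$ with $\|\cdot\|_\rho$ give
\begin{equation*}
J(u)\geq b_{2}\!\int_\Omega A(x,|\nabla u|)+A(x,|u|)\,\mathrm{d}x-K_{1}\!\int_{\partial\Omega}\Psi(x,|u|)+h(x)|u|\,\mathrm{d}\sigma\geq b_{2}c(\vartheta)-K_{1}\beta_k,
\end{equation*}
where $c(\vartheta)>0$ is a uniform lower bound on the modular on the sphere of radius $\vartheta$ and Lemma \ref{l4.4} yields $\beta_k\to 0$; thus (B1) holds for all large $k$. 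Dropping the (nonnegative) volume term gives $J(u)\geq -K_{1}\beta_k$ on the whole ball $Z_k\cap\{\|u\|\leq\vartheta\}$, so $d_k\geq -K_{1}\beta_k\to 0$; combined with $J(0)=0$ this delivers (B3).

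The condition (B2), namely $\max_{u\in Y_k,\,\|u\|=r_k}J(u)<0$ for some small $r_k>0$, is the main labor. Writing $u=tv$ with $v\in Y_k$, $\|v\|=1$, the bounds $A_1,A_0\leq b_{1}A$ from $(A_{12})$, $(A_{02})$ together with $(A_{0})$ give
\begin{equation*}
\int_\Omega A_{1}(x,|t\nabla v|)+A_{0}(x,tv)\,\mathrm{d}x\leq C_k b_{1} A_{0}(t)t,
\end{equation*}
where $C_k$ is the (finite) supremum of the $A$-modular over the unit sphere of the finite-dimensional space $Y_k$. The essential input is the strictly positive lower bound
\begin{equation*}
\alpha_k:=\inf_{v\in Y_k,\,\|v\|=1}\int_{\partial\Omega}G(x,v)\,\mathrm{d}\sigma>0,
\end{equation*}
which on the finite-dimensional $Y_k$ reduces to compactness once one knows $G(x,\cdot)$ is strictly positive off $0$; this in turn is obtained by iterating $(G4)$ from a base value where $G$ is positive, using the positivity of $G_0$ and the evenness $(G3)$ to propagate $G(x,t)>0$ to every $t\neq 0$. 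Applying $(G4)$ again yields $\int_{\partial\Omega}G(x,tv)\,\mathrm{d}\sigma\geq \alpha_k G_{0}(t)t$, hence
\begin{equation*}
J(tv)\leq t\bigl[C_k b_{1} A_{0}(t)-\alpha_k G_{0}(t)\bigr],
\end{equation*}
and the ratio condition $G_{0}(t)/A_{0}(t)\to\infty$ from $(G4)$ makes the bracket strictly negative for $t\leq r_k$ small enough, yielding (B2). The \emph{main obstacle} is exactly securing $\alpha_k>0$: none of the stated hypotheses directly forces $G(x,\cdot)>0$, and this positivity must be extracted from the interaction of $(G3)$ and $(G4)$ together with a finite-dimensional compactness argument on $Y_k$. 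Once (B1)--(B3) and $(PS)^*_c$ are verified, the Dual Fountain Theorem produces critical values $c_k\in[d_{k_0},0)$ with $c_k\to 0$, which are the desired solutions $u_n$ of \eqref{1.1}.
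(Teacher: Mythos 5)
Your route via the Dual Fountain Theorem is genuinely different from the paper's, which runs a genus-based minimax (Clark-type): the paper defines $c_k=\inf_{\gamma(U)\geq k}\sup_{U}J$, shows $c_k<0$ by constructing, for each $k$, a \emph{freely chosen} $k$-dimensional subspace $E_k\subset C^{\infty}(\overline\Omega)$ with $u|_{\partial\Omega}\not\equiv 0$ for all $u\in E_k\setminus\{0\}$, and then gets $c_k\to 0$ from Lemma \ref{l4.4} via the genus property $U\cap Z_k\neq\emptyset$. Your treatment of (B1), (B3), the $(PS)^*$ condition and the limit $c_k\to 0$ is sound and parallels the paper's use of Lemma \ref{l4.4}.

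The genuine gap is in (B2), and it is not the one you flagged. In the Dual Fountain Theorem the sets $Y_k=\bigoplus_{j=1}^{k}X_j$ are \emph{fixed} by the same Schauder decomposition that defines the $Z_k$ used in (B1) and (B3); you are not free to replace them by convenient subspaces. Nothing in the construction of $\{e_j\}$ prevents some nonzero $v\in Y_k$ from lying in the kernel of the trace operator, i.e.\ $v|_{\partial\Omega}\equiv 0$. For such a $v$ one has $\int_{\partial\Omega}G(x,tv)\,\mathrm{d}\sigma=0$ for all $t$, hence $J(tv)=\int_{\Omega}A_1(x,|t\nabla v|)+A_0(x,tv)\,\mathrm{d}x\geq 0$, so $\max_{u\in Y_k,\,\|u\|=r_k}J(u)\geq 0$ and (B2) fails outright; equivalently your $\alpha_k$ vanishes even if $G(x,t)>0$ for all $t\neq 0$. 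Since $W^{1,A}_0(\Omega)$ is an infinite-dimensional closed subspace, ruling out $Y_k\cap\ker(\mathrm{tr})\neq\{0\}$ would require constructing a Schauder basis whose traces $e_1|_{\partial\Omega},\dots,e_k|_{\partial\Omega}$ are linearly independent for every $k$ while the $Z_k$ retain the properties needed for Lemma \ref{l4.4}; you neither do this nor note that it is needed. This is precisely the obstruction the paper's genus argument is designed to avoid: the auxiliary spaces $E_k$ used to force $c_k<0$ are decoupled from the $Y_k$, $Z_k$ of the decomposition, so they can be taken smooth with nonvanishing traces. (The secondary issue you did identify, positivity of $\int_{\partial\Omega}G(x,v)\,\mathrm{d}\sigma$ on a unit sphere, is shared with the paper, which asserts $g_k>0$ with equally little justification; iterating $(G4)$ from a point where $G>0$ is a reasonable repair for both.) To fix your proof you would either have to justify the choice of basis, or switch to a symmetric-minimax/genus formulation as the paper does.
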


\begin{proof}
By Condition ($G1$) and ($\Psi_{\infty}$), the functional $J(u)$ is coercive and satisfies the P.S. condition.\\
By ($G3$), $J(u)$ is an even functional.
 Denote by $\gamma(U)$ the genus of $U\in\Sigma:=\{U\subset W^{1,A}(\Omega)\backslash{\{0\}}: U \text{ is compact and }U=-U\}$ (see \cite{Struwe, Willem}). Set
\begin{equation*}
\begin{aligned}
\Sigma_{k}&:=\{U\in \Sigma: \gamma(U) \geq k\},\, k = 1, 2,\cdots,\\
c_{k}&:=\inf\limits_{U\in \Sigma_{k}}\sup\limits_{u\in U}I(u),\,k = 1, 2,\cdots.
\end{aligned}
\end{equation*}
Then it is clear that
\begin{equation*}
-\infty<c_{1}\leq c_{2}\leq\cdots\leq c_{k}\leq c_{k+1}\leq\cdots.
\end{equation*}

Next we will show that $c_{k}<0$ for every $k\in\mathbb{N}$. In fact, selecting $E_{k}$ be the $k$-dimensional subspaces of $W^{1,A}(\Omega)$ with $E_{k}\subset C^{\infty}(\overline{\Omega})$ such that $u|_{\partial \Omega}\not\equiv 0$
for all $u\in E_{k}\backslash\{0\}$, by $(G4)$ and $(A_{0})$, we have for $u\in E_{k}, \|u\|=1$,
\begin{align*}
J(tu)=&\int\limits_{\Omega}A_{1}(x,|t\nabla u|)+A_{0}(x,tu)\,\mathrm{d}x
-\int\limits_{\partial\Omega}G(x,tu)\,\mathrm{d}\sigma\\
\leq& A_{0}(t)t\int\limits_{\Omega}A_{1}(x,|\nabla u|)+A_{0}(x,u)\,\mathrm{d}x
-G_{0}(t)t\int\limits_{\partial\Omega}G(x,u)\,\mathrm{d}\sigma,\\
:=& A_{0}(t)t a_{k}-G_{0}(t)t g_{k},
\end{align*}
where
\begin{equation*}
\begin{aligned}
a_{k}&=\sup\bigg\{\int\limits_{\Omega}A_{1}(x,|\nabla u|)+A_{0}(x,u)\,\mathrm{d}x:\,u\in E_{k},\|u\|=1\bigg\},\\
g_{k}&=\inf\bigg\{\int\limits_{\partial\Omega}G(x,u)\,\mathrm{d}\sigma:\,u\in E_{k},\|u\|=1\bigg\}.
\end{aligned}
\end{equation*}
Observing that $0<a_{k}<+\infty$ and $g_{k}>0$ since $E_{k}$ is of finite dimensional, by the assumption $(G4)$, there exists positive constants $\rho$ and $\varepsilon$ such that
\begin{equation*}
J(\rho u)<-\varepsilon\text{ for }u\in E_{k},\|u\|=1.
\end{equation*}
Therefore, if we set $S_{\rho,k}=\{u\in E_{k}:\|u\|=\rho\}$, then $S_{\rho,k}\subset J^{-\varepsilon}$. Hence, by the monotonicity of the genus
\begin{equation*}
\gamma(J^{-\varepsilon}) \geq \gamma(S_{\rho,k})=k,
\end{equation*}
By the minimax argument, each $c_{k}$ is a critical value of $J$ (see \cite{Struwe}),  hence there is a sequence of weak solutions $\{\pm u_{k},k=1,2,\cdots\}$ of the problem (\ref{1.1}) such that $J(u_{k})=c_{k}<0$.

Next we will show that $c_{k}\to 0$.  In fact, by $(G1)$ and Lemma \ref{l4.4}, for $u\in Z_{k},\|u\|\leq \vartheta$, we obtain
\begin{align*}
J(u)&\geq \int\limits_{\Omega}A_{1}(x,|\nabla u|)+A_{0}(x,u)\,\mathrm{d}x-\int\limits_{\partial\Omega}\Psi(x,|u|)+h(x)|u|\,\mathrm{d}\sigma\\
&\geq -\int\limits_{\partial\Omega}\Psi(x,|u|)+h(x)|u|\,\mathrm{d}\sigma\geq -\beta_{k}.
\end{align*}
It is clear that for $U\in \Sigma_{k}$, we have $\gamma(U)\geq k$. Then according to the properties of genus, we conclude $U\cap Z_{k}\neq\emptyset$ for $U\in \Sigma_{k}$. Then
\begin{equation*}
\sup\limits_{u\in U\in\Sigma_{k}}J(u)\geq -\beta_{k}.
\end{equation*}
By the definition of $c_k$, it is easy to see that $0\geq c_{k}\geq-\beta_{k}$, which implies $c_{k}\to 0$ by $\beta_k\rightarrow0$ from Lemma \ref{l4.4}.
\end{proof}

\section{Acknowledgments}

The research is partly supported by the National Natural Science
Foundation of China (NSFC 11501268 and 11471147) and the Fundamental Research Funds for the Central Universities (lzujbky-2016-101). The authors express thanks to professor Peihao Zhao for his valuable suggestions on the paper.

\renewcommand{\baselinestretch}{0.1}
\bibliographystyle{plain}

\end{document}